\documentclass[11pt]{article}
\usepackage{amsmath,amssymb,amsthm}
\usepackage[hidelinks]{hyperref}
\usepackage{cleveref}
\usepackage{setspace}

\onehalfspacing

\usepackage{enumitem}
\newlist{steps}{enumerate}{1}
\setlist[steps, 1]{label = Step \arabic*:}

\newtheorem{theorem}{Theorem}[section]
\newtheorem{proposition}[theorem]{Proposition}
\newtheorem{lemma}[theorem]{Lemma}

\theoremstyle{definition}

\newtheorem{example}[theorem]{Example}

\theoremstyle{remark}

\crefname{equation}{}{}
\Crefname{equation}{Equation}{Equations}
\crefname{theorem}{Theorem}{Theorems}
\Crefname{theorem}{Theorem}{Theorems}
\crefname{lemma}{Lemma}{Lemmas}
\Crefname{lemma}{Lemma}{Lemmas}
\crefname{proposition}{Proposition}{Propositions}
\Crefname{proposition}{Proposition}{Propositions}
\crefname{corollary}{Corollary}{Corollaries}
\Crefname{corollary}{Corollary}{Corollaries}
\crefname{conjecture}{Conjecture}{Conjectures}
\Crefname{conjecture}{Conjecture}{Conjectures}
\crefname{section}{Section}{Sections}
\Crefname{section}{Section}{Sections}
\crefname{example}{Example}{Examples}
\Crefname{example}{Example}{Examples}
\crefname{problem}{Problem}{Problems}
\Crefname{problem}{Problem}{Problems}
\crefname{remark}{Remark}{Remarks}
\Crefname{remark}{Remark}{Remarks}
\crefname{figure}{Figure}{Figures}
\Crefname{figure}{Figure}{Figures}

\newcommand{\ZZ}{\mathbb{Z}}


\newcommand{\arxiv}[1]{\href{http://arxiv.org/abs/#1}{\texttt{arXiv:#1}}}

\title{Covering of triples by quadruples \\ in bipartite and tripartite settings}

\author{Alexander Sidorenko\\
\small Department of Extremal Combinatorics \\[-0.8ex]
\small Alfr\'ed R\'enyi Institute of Mathematics, Hungary \\
\small\tt sidorenko.ny@gmail.com
}

\date{\today}

\begin{document}

\maketitle

\begin{abstract}

Let $A$ and $B$ be disjoint sets of sizes $a$ and $b$, respectively. 
Let $f(a,b)$ denote the minimum number of quadruples needed 
to cover all triples $T \subseteq A \cup B$ such that $|T \cap A| \geq 2$. 
We prove upper and lower bounds on $f(a,b)$ 
and use them to derive upper bounds for the $(n,4,3,4)$-lottery problem. 

\medskip
\noindent
Keywords: 
lottery problem,
Tur\'{a}n numbers,
Steiner systems, 
disjoint coverings.

\noindent
MSC2020: 05B30, 05B40
\end{abstract}

\section{Introduction}

A system $\cal{R}$ of $r$-element subsets of an $n$-element underlying set $V$ 
is called an $(n,r,t,k)$-\emph{lottery system} 
if for every $k$-element subset $K \subseteq V$ 
one can find a member $R\in\cal{R}$ with $|R \cap K| \geq t$. 
The minimum size of such a system, denoted by $L(n,r,t,k)$, 
was studied in \cite{
Bate:1998,
Bertolo:2004,
Cushing:2023,
Droesbeke:1982,
Furedi:1996,
Hanani:1964,
Li:1999,Li:2000,Li:2002,Montecalvo:2015}. 
The smallest presently known lottery systems with parameters 
$n \leq 90$, $t \leq r \leq 15$, $k \leq 20$ can be found in \cite{Italian:tables}. 
We are primarily interested in the case $r=k=4$, $t=3$, 
and will write $L(n)$ instead of $L(n,4,3,4)$. 

An $(n,r,t,t)$-lottery system is also known as an $(n,r,t)$-\emph{covering} system. 
Its minimum size is $C(n,r,t)=L(n,r,t,t)$. 
A covering system of this size is called \emph{optimal}. 
It is known (see \cite{Fort:1958}) that 
$C(a,3,2) = \lceil\frac{a}{3}\lceil\frac{a-1}{2}\rceil\rceil$. 
To simplify notation, we will write $C(n)$ instead of $C(n,3,2)$. 

The \emph{Tur\'an number} $T(n,k,r)$ is the minimum size of a system 
of $r$-element subsets of an $n$-element set such that 
every $k$-element subset contains a member of the system. 
Obviously, $T(n,k,r) = L(n,r,r,k)$. 
A survey on the Tur\'an numbers can be found in \cite{Sidorenko:1995}. 
The exact values of $T(n,4,3)$ up to $n=18$ can be found in \cite{Markstrom:2022}. 
As every block of an $(n,4,3,4)$-lottery system contains $4$ triples, 
it is easy to see that $L(n,4,3,4) \geq \frac{1}{4}\, T(n,4,3)$. 
The famous Tur\'an's conjecture states that 
$\lim_{n\to\infty} n^{-3} \, T(n,4,3) = \frac{2}{27}$. 
As far as we know, the best lower bound for this limit is $0.07305$ 
(see \cite{Razborov:2014}).
The upper bounds on $L(n)$ which we present in this article imply  
$\lim_{n\to\infty} n^{-3} \, L(n) \leq \frac{1}{54}$. 
Hence, if the Tur\'an's conjecture is true, then 
$\lim_{n\to\infty} n^{-3} \, L(n) = \frac{1}{54}$. 

We derive our bounds on $L(n)$ from the following problem that 
is interesting in itself. 
Let $A,B$ be disjoint finite sets. 
We call a system ${\cal Q}$ of quadruples from $V = A \cup B$ 
an $(A,B)$-\emph{system} if every triple $T \subset V$ with $|T \cap A| \geq 2$ 
is contained in one of the blocks of ${\cal Q}$. 
Let $f(a,b)$ denote the minimum size of an $(A,B)$-system with $|A|=a$, $|B|=b$. 
In Sections~2-4, we will construct $(A,B)$-systems 
and prove upper and lower bounds on $f(a,b)$. 
In particular, we will find the exact values of $f(a,b)$ in the following cases:
\begin{enumerate}[label=\arabic*)]
\item 
$a$ is odd, \:$b \geq a-2$;
\item 
$a \equiv 2,6 \bmod 12$, \:$b \geq a$;
\item 
$a$ is even, $b \geq 2a-2$. 
\end{enumerate}

In \cref{sec:lotto} we use upper bounds on $f(a,b)$ with $b \geq a-3$ 
to get upper bounds on $L(n)$. 
In \cref{sec:concluding}, we discuss $(A,B)$-systems with blocks  of size $r>4$ 
and their application to the $(n,r,3,4)$-lottery problem. 

We use the notation $[n] := \{1,2,\ldots,n\}$.

\section{Lower bounds on $f(a,b)$}

A system of triples is called a \emph{packing} 
if every pair of elements is contained in at most one triple. 
Let $P(a)$ denote the size of the largest packing system on $a$ elements. 
Systems of that size will be called \emph{optimal}. 

\begin{proposition}[{\cite[Theorem~1]{Spencer:1968}}]\label{th:packing}
\begin{align*}
  P(a) = \begin {cases}
    \left\lfloor\frac{a}{3}\left\lfloor\frac{a-1}{2}\right\rfloor\right\rfloor
    \;\;\;{\rm if}\; a \not\equiv 5 \bmod 6;
    \\
    \left\lfloor\frac{a}{3}\left\lfloor\frac{a-1}{2}\right\rfloor\right\rfloor - 1
    \;\;\;{\rm if}\; a \equiv 5 \bmod 6.
  \end{cases}
\end{align*}
\end{proposition}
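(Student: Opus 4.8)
The plan is to establish the upper bound by a local counting argument and the matching lower bound by explicit constructions, organized according to the residue of $a$ modulo $6$. For the upper bound, I fix an optimal packing $\mathcal{P}$ and, for each point $x$, count the triples of $\mathcal{P}$ containing $x$. Each such triple uses two of the $a-1$ pairs through $x$, and since $\mathcal{P}$ is a packing no pair is used twice; hence the number of triples through $x$ is at most $\lfloor (a-1)/2\rfloor$. Summing over all $x$ counts every triple three times, so
\[ 3\, P(a) \le a \left\lfloor \frac{a-1}{2}\right\rfloor, \]
and since $P(a)$ is an integer this gives $P(a) \le \lfloor \frac{a}{3}\lfloor \frac{a-1}{2}\rfloor\rfloor$, settling the first case.

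For the sharper bound when $a \equiv 5 \pmod 6$, I would pass to the \emph{leave} graph $L$ on the same point set, whose edges are exactly the pairs covered by no triple of $\mathcal{P}$. The degree of $x$ in $L$ equals $(a-1)-2r_x$, where $r_x$ is the number of triples through $x$, so every vertex of $L$ has degree $\equiv a-1 \pmod 2$. Since $a$ is odd here, all these degrees are even, whence $L$ cannot consist of a single edge and its smallest nonempty instance is a triangle. A second constraint comes from divisibility: the number $3|\mathcal{P}|$ of covered pairs is a multiple of $3$, so $|E(L)| \equiv \binom{a}{2} \pmod 3$. For $a \equiv 5 \pmod 6$ one checks $\binom{a}{2} \equiv 1 \pmod 3$, which rules out both the empty leave and the triangle; the smallest even-degree graph with $|E(L)| \equiv 1 \pmod 3$ is a four-cycle. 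Hence $|E(L)| \ge 4$, and substituting into $3|\mathcal{P}| = \binom{a}{2} - |E(L)|$ yields exactly the claimed improvement by one.

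The matching lower bound requires constructing packings of the stated size for every $a$, and this is where the real work lies. When $a \equiv 1,3 \pmod 6$ a Steiner triple system exists (Kirkman), giving a perfect packing with empty leave and $P(a) = \binom{a}{2}/3$. For the remaining residues I would build an optimal packing whose leave realizes the minimum forced above: a perfect matching when $a \equiv 0,2 \pmod 6$, a near-matching with one $K_{1,3}$ component when $a \equiv 4 \pmod 6$, and a four-cycle when $a \equiv 5 \pmod 6$. These designs can be produced either by deleting a suitable small configuration from a Steiner system on a slightly larger ground set, or by a direct difference-family construction modulo $a$ (or $a-1$) together with a few short orbits chosen to patch the prescribed leave.

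The main obstacle is this explicit construction step: one must verify uniformly across all residue classes that a packing attaining the extremal leave exists, and in particular reconcile the parity constraint on the leave degrees with the divisibility-by-three constraint on $|E(L)|$ simultaneously. The counting upper bound, by contrast, is routine once the leave graph has been introduced.
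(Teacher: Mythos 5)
Your upper-bound half is correct and complete. The degree count $3P(a)\le a\lfloor (a-1)/2\rfloor$ settles all $a\not\equiv 5 \pmod 6$ (one can check the resulting floor is actually attained in each such residue class), and your leave-graph refinement for $a\equiv 5\pmod 6$ is sound: since $a$ is odd every leave degree $(a-1)-2r_x$ is even, $|E(L)|\equiv\binom{a}{2}\equiv 1\pmod 3$, and the smallest even-degree simple graph with edge count $\equiv 1\pmod 3$ indeed has four edges, which forces $3P(a)\le\binom{a}{2}-4$ and yields exactly the extra $-1$. For the record, the paper itself offers no proof to compare against --- it quotes the result as Theorem~1 of Spencer's 1968 paper --- so your attempt must stand on its own.

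And on its own it has a genuine gap, one you yourself flag: the lower bound (existence of packings of the stated size) is never established. For $a\equiv 1,3\pmod 6$ Kirkman's theorem does suffice, and for $a\equiv 0,2\pmod 6$ your ``delete from a larger Steiner system'' suggestion works immediately --- here $a+1\equiv 1,3\pmod 6$, so an $S(2,3,a+1)$ exists, and deleting one point leaves a packing on $a$ points whose leave is the perfect matching formed by the punctured triples, which one checks has exactly $\lfloor\frac{a}{3}\lfloor\frac{a-1}{2}\rfloor\rfloor$ blocks. But for $a\equiv 4\pmod 6$ and $a\equiv 5\pmod 6$ we have $a+1\equiv 5,0\pmod 6$, no Steiner triple system on $a+1$ points exists, and the deletion trick fails outright; the difference-family alternative is named but not executed. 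These are precisely the cases where the real work of Spencer's paper lies: constructing packings whose leave is a $K_{1,3}$ plus $(a-4)/2$ independent edges (so $|E(L)|=a/2+1$, reconciling odd degrees with $|E(L)|\equiv 0\pmod 3$) when $a\equiv 4$, and an even graph on four edges when $a\equiv 5$. As written, the existence half of the theorem is asserted rather than proved, so the attempt is incomplete in exactly the place you identify as the main obstacle.
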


For a system ${\cal T}$ of triples on a finite set $A$, 
we define its \emph{weight} $w({\cal T})$ as the number of triples in ${\cal T}$ 
plus one-half the number of pairs in $A$ not covered by these triples. 
We denote by $C_*(a)$ the minimum weight of such a system when $|A|=a$. 

\begin{proposition}\label{th:C_*}
\begin{align*}
  C_*(a) =
  \begin{cases}
      \left\lceil\frac{a^2-a}{6}\right\rceil
       \;\;\;{\rm if}\; a \;{\rm is\;odd}; \\
      \frac{1}{2} \left\lceil\frac{2a^2-a}{6}\right\rceil
        \;\;\;{\rm if}\; a \;{\rm is\;even}.
  \end{cases}
\end{align*}
\end{proposition}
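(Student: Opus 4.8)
The plan is to first reduce the statement to the single identity
\[
C_*(a) = \tfrac12\left(\binom{a}{2} - P(a)\right),
\]
and then to recover the two displayed expressions by substituting the values of $P(a)$ from \cref{th:packing} and simplifying according to the residue of $a$ modulo $6$. To set up the reduction, consider any system ${\cal T}$ of triples on $A$, write $m = |{\cal T}|$ for the number of triples, and let $q$ be the number of distinct pairs covered by ${\cal T}$; then $\binom{a}{2} - q$ pairs are uncovered and
\[
w({\cal T}) = m + \tfrac12\left(\binom{a}{2} - q\right) = \tfrac12\binom{a}{2} + \Bigl(m - \tfrac{q}{2}\Bigr).
\]
Thus minimizing $w({\cal T})$ is the same as maximizing $q - 2m$.

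For the upper bound I would take ${\cal T}$ to be a maximum packing, so that $m = P(a)$ and, since no pair is covered twice, $q = 3P(a)$. Substituting gives $w({\cal T}) = \tfrac12\binom{a}{2} - \tfrac12 P(a)$, which shows $C_*(a) \le \tfrac12\bigl(\binom{a}{2} - P(a)\bigr)$.

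The heart of the argument is the matching lower bound, for which I would prove the inequality $q \le 2m + P(a)$ for every system ${\cal T}$. Given ${\cal T}$, choose a subfamily ${\cal P}\subseteq{\cal T}$ that is a packing and is maximal with this property, and write $s = |{\cal P}|$; the triples of ${\cal P}$ cover exactly $3s$ distinct pairs. By maximality, every triple $T \in {\cal T}\setminus{\cal P}$ shares a pair with some triple of ${\cal P}$ (otherwise ${\cal P}\cup\{T\}$ would still be a packing), so $T$ can contribute at most two pairs not already covered by ${\cal P}$. Since there are $m - s$ such triples, the total number of covered pairs satisfies $q \le 3s + 2(m-s) = 2m + s$. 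As ${\cal P}$ is a packing on $a$ elements, $s \le P(a)$, whence $q - 2m \le P(a)$. Plugging this into the expression for $w({\cal T})$ yields $w({\cal T}) \ge \tfrac12\binom{a}{2} - \tfrac12 P(a)$, completing the proof of the identity.

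It then remains to check that $\tfrac12\bigl(\binom{a}{2} - P(a)\bigr)$ equals the claimed closed form, which is a routine case analysis using \cref{th:packing}: when $a \equiv 1,3 \bmod 6$ a Steiner triple system exists, $P(a) = \binom{a}{2}/3$, and the value is exactly $\tfrac{a^2-a}{6}$, while the even residues and the case $a \equiv 5 \bmod 6$ require tracking the floor/ceiling corrections. I expect the only genuinely delicate point to be this arithmetic reconciliation in the cases where $\tfrac{a^2-a}{6}$ (respectively $\tfrac{2a^2-a}{6}$) is not an integer: there the elementary estimate $w({\cal T}) \ge \binom{a}{2}/3$ coming from the relaxation $q \le \binom{a}{2}$ is strictly weaker than $C_*(a)$, so it is essential that the lower bound be driven by the exact value $P(a)$ extracted via the maximal sub-packing rather than by the fractional bound on $q$.
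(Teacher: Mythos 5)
Your proposal is correct and takes essentially the same route as the paper: both arguments reduce the problem to the identity $C_*(a)=\tfrac12\bigl(\binom{a}{2}-P(a)\bigr)$ and then perform the same residue-class arithmetic using Spencer's values of $P(a)$. The only difference is mechanical—where you obtain the lower bound in one shot via a maximal sub-packing (giving $q \le 2m + s \le 2m + P(a)$), the paper iteratively deletes one of any two blocks sharing a pair and observes the weight cannot increase; the two arguments are interchangeable.
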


\begin{proof}[\bf{Proof}]
If a system of triples contains two blocks that share a pair, 
we may remove one of them; the number of triples will decrease by one, 
and the number of uncovered pairs will decrease at most by two, 
so the weight of the system will not increase. 
Hence, the minimum weight is attained by a packing system. 
If this system has size $t$, its weight is 
$t+ \frac{1}{2}(\frac{a(a-1)}{2}-3t) = \frac{1}{2}(\frac{a^2-a}{2}-t)$ 
and is minimized when $t=P(a)$. 
When $a$ is even, by \cref{th:packing}, 
$P(a) = \left\lfloor\frac{a}{3}\cdot\frac{a-2}{2}\right\rfloor$, 
so $C_*(a) = \frac{1}{2}\left\lceil\frac{a^2-a}{2} - \frac{a^2-2a}{6}\right\rceil
= \frac{1}{2} \left\lceil\frac{2a^2-a}{6}\right\rceil$. 
When $a \equiv 1,3 \bmod 6$, $P(a) = \frac{a(a-1)}{6}$, 
so $C_*(a) = \frac{a(a-1)}{6}$.
When $a \equiv 5 \bmod 6$, $P(a) = \left\lfloor\frac{a(a-1}{6}\right\rfloor - 1$, 
so $C_*(a) = \frac{1}{2}\left\lceil\frac{a^2-a}{2} - \frac{a^2-a-6}{6}\right\rceil
= \frac{1}{2}\left\lceil\frac{a^2-a+3}{3}\right\rceil$. 
Since $(a^2-a) \equiv 2 \bmod 6$, the last expression is equal to 
$\left\lceil\frac{a^2-a}{6}\right\rceil$. 
\end{proof}

\begin{theorem}\label{th:lower}
$f(a,b) \geq b\,C_*(a)$. 
\end{theorem}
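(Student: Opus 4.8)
The plan is to localize the covering requirement at each point of $B$ and to recognize the weight function $C_*$ as exactly the right accounting device. First I would discard from any $(A,B)$-system $\mathcal{Q}$ every block $Q$ with $|Q \cap A| \leq 1$: such a block contains no triple $T$ with $|T \cap A| \geq 2$, so it is useless and removing it only lowers $|\mathcal{Q}|$. After this reduction every remaining block satisfies $|Q \cap B| \in \{0,1,2\}$; write $n_1$ and $n_2$ for the number of blocks with $|Q \cap B| = 1$ and $|Q \cap B| = 2$ respectively, so that $f(a,b) = |\mathcal{Q}| \geq n_1 + n_2$.

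Next, fix $x \in B$ and look only at the blocks through $x$. A block $Q \ni x$ with $|Q \cap B| = 1$ has $Q \cap A$ equal to a triple; I collect these triples into a system $\mathcal{T}_x$ on $A$ and set $t_x = |\mathcal{T}_x|$. A block $Q \ni x$ with $|Q \cap B| = 2$ has the form $\{x, y, p, q\}$ with $p, q \in A$ and contributes the single pair $\{p, q\}$; let $s_x$ be the number of such blocks through $x$. The covering condition forces, for every pair $\{p, q\} \subseteq A$, that the required triple $\{p, q, x\}$ lie in some block; that block is either a triple of $\mathcal{T}_x$ (covering $\{p,q\}$) or one of the $s_x$ two-in-$B$ blocks (covering only $\{p,q\}$). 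Hence, if $u_x$ denotes the number of pairs of $A$ left uncovered by $\mathcal{T}_x$, these $u_x$ pairs must all be supplied by two-in-$B$ blocks, each of which supplies exactly one pair, giving $u_x \leq s_x$.

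Now I would introduce the cost $g(x) := t_x + \tfrac{1}{2} s_x$ at each $x$. On one hand, $g(x) \geq t_x + \tfrac{1}{2} u_x = w(\mathcal{T}_x) \geq C_*(a)$, using $u_x \leq s_x$, the definition of weight, and \cref{th:C_*}. On the other hand, summing over $x \in B$ the two pieces telescope correctly: each one-in-$B$ block is counted once (by its unique $B$-point), so $\sum_{x} t_x = n_1$, while each two-in-$B$ block is counted twice (once at each of its two $B$-points), so $\sum_{x} s_x = 2 n_2$; therefore $\sum_{x \in B} g(x) = n_1 + n_2$. Combining the two estimates yields $n_1 + n_2 = \sum_{x} g(x) \geq b\,C_*(a)$, and so $f(a,b) \geq n_1 + n_2 \geq b\,C_*(a)$.

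The only subtle point, and the reason the weight $C_*$ carries the factor $\tfrac{1}{2}$, is the two-in-$B$ blocks: a block $\{x,y,p,q\}$ serves both $x$ and $y$, so charging it fully to each point would double-count and lose a factor of $2$. The $\tfrac{1}{2}$-weighting in $g(x)$ is precisely what makes the per-point bound $g(x) \geq C_*(a)$ and the global identity $\sum_{x} g(x) = n_1 + n_2$ simultaneously hold; reconciling these two facts is the crux of the argument, and everything else is bookkeeping.
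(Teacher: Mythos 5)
Your proof is correct and follows essentially the same route as the paper's: decompose the system at each element of $B$ into the triple system $\mathcal{T}_x$, charge each block with two $B$-elements half to each of its $B$-points, and apply the bound $w(\mathcal{T}_x) \geq C_*(a)$. If anything, your version is slightly more careful than the paper's, which asserts that the number of two-in-$B$ blocks \emph{equals} $\frac{1}{2}\sum_i p_i$ where only the inequality $\geq$ (your $u_x \leq s_x$) is needed and justified.
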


\begin{proof}[\bf{Proof}]
Let $A \cap B = \emptyset$, $|A|=a$, $B=\{y_1,\ldots,y_b\}$, 
and ${\cal Q}$ be an $(A,B)$-system. 
Denote by ${\cal T}_i$ the system of triples $T \subseteq A$ such that 
$T\cup\{y_i\}$ is a block of ${\cal Q}$ ($i=1,\ldots,b$). 
Suppose, a pair $x',x'' \in A$ is not covered by blocks of $T_i$. 
Then there exists $j \ne i$ such that $\{x',x'',y_i,y_j\}$ is a block of ${\cal Q}$. 
Let $p_i$ be the number of pairs in $A$ not covered by ${\cal T}_i$. 
The number of blocks in ${\cal Q}$ that have exactly two elements from $B$ 
is equal to $\frac{1}{2} \sum_{i=1}^b p_i$, 
and the number of blocks that have exactly one element from $B$ is 
$\sum_{i=1}^b |{\cal T}_i|$, 
Therefore, 
$|{\cal Q}| \geq \sum_{i=1}^b (|{\cal T}_i| + \frac{1}{2} p_i)
= \sum_{i=1}^b w({\cal T}_i) \geq b\,C_*(a)$. 
\end{proof}

We will demonstrate in \cref{sec:upper} that 
$f(a,b) = b\,C_*(a)$ when $a$ is odd and $b \geq a-2$, 
or when $a,b$ are even and $b \geq 2a-2$. 
In the case of even $a$ and odd $b$, 
we need to strengthen the lower bound of \cref{th:lower}.

\begin{lemma}\label{th:d/12}
Let ${\cal T}$ be a system of triples on an $a$-element set $A$ 
where $a \equiv 0,2 \bmod 6$. 
Let $d$ be the number of elements $x \in A$ for which there are triples 
$t',t''\in{\cal T}$ such that $|t' \cap t''|=2$, $x \in (t' \cap t'')$. 
Then $w({\cal T}) \geq C_*(a) + \frac{1}{2}\left\lceil\frac{d}{6}\right\rceil$. 
\end{lemma}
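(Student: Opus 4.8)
The plan is to recast the weight in terms of two global quantities and then to combine a parity constraint with a divisibility constraint modulo $3$. Write $t=|{\cal T}|$, let $u$ be the number of uncovered pairs, and let $e$ be the \emph{excess}: the number of pairs counted with multiplicity ($=3t$) minus the number of distinct covered pairs, equivalently $e=\sum_{p}(m_p-1)^{+}$ where $m_p$ is the number of blocks of ${\cal T}$ containing the pair $p$. Since $3t=\binom{a}{2}-u+e$, substituting into $w({\cal T})=t+\tfrac12 u$ yields the identity
\begin{equation*}
w({\cal T}) \;=\; \tfrac13\binom{a}{2} + \tfrac16\,(u+2e).
\end{equation*}
A direct check against \cref{th:C_*} gives $C_*(a)=\tfrac13\binom{a}{2}+\tfrac{1}{12}a$ for $a\equiv 0,2\bmod 6$, so the claim becomes the purely numerical inequality $u+2e \geq \tfrac{a}{2} + 3\lceil d/6\rceil$, which is what I will establish.

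First I would set up the local accounting. For $x\in A$ let $u_x$ be the number of uncovered pairs at $x$ and $\ell_x=\sum_{y}(m_{xy}-1)^{+}$ its local excess; then $x$ is one of the $d$ counted vertices exactly when $\ell_x\geq 1$. Counting pairs at $x$ with multiplicity gives $2r_x=(a-1)-u_x+\ell_x$, where $r_x$ is the number of blocks through $x$, and since $a$ is even this forces $u_x+\ell_x$ to be odd, hence $\geq 1$, for every $x$. Summing over $x$ and using $\sum_x u_x=2u$, $\sum_x\ell_x=2e$ gives the first key estimate $u+e\geq \tfrac{a}{2}$; write $s:=(u+e)-\tfrac{a}{2}\geq 0$, a nonnegative integer. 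A second, immediate estimate is $e\geq \tfrac{d}{2}$, since each of the $d$ counted vertices contributes at least $1$ to $\sum_x\ell_x=2e$. Together these give $u+2e=\tfrac{a}{2}+(s+e)$, so it remains to show $s+e\geq 3\lceil d/6\rceil$.

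The crux is a divisibility argument that upgrades the slack to the next multiple of $3$. From $3t=\binom{a}{2}-u+e$ one reads off $u-e\equiv\binom{a}{2}\pmod 3$, and substituting $u=\tfrac{a}{2}+s-e$ rewrites this as $s-2e\equiv \binom{a}{2}-\tfrac{a}{2}\pmod 3$. Here the hypothesis enters decisively: for even $a$ one has $\binom{a}{2}-\tfrac{a}{2}=2\cdot\tfrac{a}{2}\bigl(\tfrac{a}{2}-1\bigr)$, which is divisible by $3$ precisely when $\tfrac{a}{2}\equiv 0,1\pmod 3$, i.e. exactly for $a\equiv 0,2\bmod 6$. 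Hence $s\equiv 2e\pmod 3$, so $s+e\equiv 3e\equiv 0\pmod 3$. Thus $s+e$ is a nonnegative multiple of $3$ that is at least $d/2$; the smallest such multiple is $3\lceil d/6\rceil$, which forces $s+e\geq 3\lceil d/6\rceil$ and finishes the proof.

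I expect the main obstacle to be recognizing that both ingredients are genuinely needed and that neither alone suffices. The naive per-vertex bound only yields $u+2e\geq \tfrac{a}{2}+\tfrac{d}{2}$, which is short of $\tfrac a2+3\lceil d/6\rceil$ whenever $d\not\equiv 0\bmod 6$; the extra strength comes solely from the congruence $s+e\equiv 0\pmod 3$, which rounds $d/2$ up to $3\lceil d/6\rceil$. It is worth double-checking the borderline computation $\binom{a}{2}-\tfrac a2\equiv 0\pmod 3$, since it is exactly the stated residue restriction on $a$ and, in particular, explains why the case $a\equiv 4\bmod 6$ (where $\tfrac a2\equiv 2\pmod 3$) must be excluded.
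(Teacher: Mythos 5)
Your proof is correct and is essentially the paper's argument in different bookkeeping: your excess $e$ together with the per-vertex parity relation $2r_x=(a-1)-u_x+\ell_x$ reproduces the paper's two estimates ($l\geq d/2$ for multiply covered pairs and $p\geq\frac{a-d}{2}$ from parity at the uncounted vertices), yielding the same pre-rounding bound $6\,w({\cal T})\geq\frac{2a^2-a}{2}+\frac{d}{2}$. Your mod-$3$ congruence is the paper's integrality rounding in disguise, since $s+e=6\,w({\cal T})-3\cdot\frac{2a^2-a}{6}$ and $6\,w({\cal T})=3\cdot 2\,w({\cal T})\equiv 0 \pmod 3$, with the hypothesis $a\equiv 0,2\bmod 6$ entering in both proofs only through the integrality of $\frac{2a^2-a}{6}$.
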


\begin{proof}[\bf{Proof}]
Let $l$ be the number of pairs in $A$ 
that are contained in two or more of blocks of ${\cal T}$, 
and let $p$ be the number of pairs, not contained in any blocks. 
Then $l \geq d/2$ and 
$p \geq \binom{a}{2} - 3|{\cal T}| + l$, 
so $|{\cal T}| + \frac{1}{3}\, p \geq \frac{1}{3}\binom{a}{2} + \frac{d}{6}$. 
Since $p \geq \frac{a-d}{2}$, we get 
\begin{align*}
  w({\cal T}) \, = \, |{\cal T}| + \frac{p}{2} 
  \: = \: |{\cal T}| + \frac{p}{3} + \frac{p}{6}
  \:\geq\: \frac{1}{3}\binom{a}{2} + \frac{d}{6} + \frac{a-d}{12}
  \: = \: \frac{2a^2 - a + d}{12} \, .
\end{align*}
As $a \equiv 0,2 \bmod 6$, $\;\frac{2a^2-a}{6}$ is integer. 
Since $2w({\cal T})$ is integer too, we get 
$2w({\cal T}) \geq \frac{2a^2-a}{6} + \left\lceil\frac{d}{6}\right\rceil
  = 2C_*(a) + \left\lceil\frac{d}{6}\right\rceil$.
\end{proof}

\begin{theorem}\label{th:lower+}
For $a \equiv 0,2 \bmod 6$ and odd $b$, 
\begin{align*}
 f(a,b) \geq (b-1)\,C_*(a) + C(a). 
\end{align*}
\end{theorem}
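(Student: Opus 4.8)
The plan is to keep the block bookkeeping of \cref{th:lower} but to squeeze out an extra additive term $C(a)-C_*(a)$ by a parity argument that uses $a$ even together with $b$ odd, calling on \cref{th:d/12} precisely at the vertices where the parity degenerates. Throughout, $\mathcal{T}_i$ and $p_i$ are as in \cref{th:lower}; for a pair $e\subseteq A$ let $u_e$ be the number of indices $i$ with $e$ uncovered by $\mathcal{T}_i$, so $\sum_e u_e=\sum_i p_i$, and let $S_i$ denote the leave graph (the uncovered pairs) of $\mathcal{T}_i$.

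First I would sharpen the count of blocks with two elements of $B$. For fixed $e$, the blocks $\{e,y_i,y_j\}$ form a graph $G_e$ on $[b]$ in which every index $i$ with $e$ uncovered by $\mathcal{T}_i$ is non-isolated, so $G_e$ has at least $\lceil u_e/2\rceil$ edges. Summing over $e$ and adding the one-$B$-element blocks gives, with $O:=\#\{e:\,u_e\text{ odd}\}$,
\[
|\mathcal{Q}|\ \ge\ \sum_{i=1}^{b}|\mathcal{T}_i|+\sum_{e}\Bigl\lceil\tfrac{u_e}{2}\Bigr\rceil\ =\ \sum_{i=1}^{b}w(\mathcal{T}_i)+\tfrac12 O .
\]
Doubling and invoking \cref{th:C_*} together with \cref{th:d/12} in the form $\delta_i:=2w(\mathcal{T}_i)-2C_*(a)\ge\lceil d_i/6\rceil$ (a nonnegative integer), and using the short computation $2\bigl(C(a)-C_*(a)\bigr)=\lceil a/6\rceil$, valid for both $a\equiv0$ and $a\equiv2\pmod6$, the theorem reduces to the single inequality $\sum_i\delta_i+O\ \ge\ \lceil a/6\rceil$.

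The heart of the argument is a parity computation on the graph $\Gamma$ on $A$ whose edges are the pairs $e$ with $u_e$ odd, so that $O=|E(\Gamma)|$. Here $\deg_\Gamma(x)\equiv\sum_{e\ni x}u_e=\sum_i\deg_{S_i}(x)\pmod2$. Writing $\deg_{S_i}(x)=(a-1)-c_{i,x}$ with $c_{i,x}$ the number of covered pairs at $x$, and $c_{i,x}=2r_{i,x}-\mathrm{rep}_{i,x}\equiv\mathrm{rep}_{i,x}\pmod2$, where $\mathrm{rep}_{i,x}$ is the multiplicity excess of the pairs at $x$ and $r_{i,x}$ the number of triples of $\mathcal{T}_i$ through $x$, one finds $\deg_\Gamma(x)\equiv b(a-1)-\sum_i c_{i,x}\equiv1+\sum_i\mathrm{rep}_{i,x}\pmod2$, since $a-1$ and $b$ are odd. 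Thus $\deg_\Gamma(x)$ is odd except on $V:=\{x:\sum_i\mathrm{rep}_{i,x}\text{ odd}\}$. Each $x\notin V$ then has positive degree, giving $O\ge\tfrac12(a-|V|)$; and each $x\in V$ lies in a multiply-covered pair of some $\mathcal{T}_i$, so $\sum_i d_i\ge|V|$ and hence $\sum_i\delta_i\ge\tfrac16\sum_i d_i\ge\tfrac16|V|$. Adding these, $\sum_i\delta_i+O\ge\tfrac16|V|+\tfrac12(a-|V|)=\tfrac a2-\tfrac13|V|\ge\tfrac a6$, and since the left-hand side is an integer it is at least $\lceil a/6\rceil$.

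The hard part will be exactly this last balancing: the vertices where $\Gamma$ might fail to carry an edge are precisely those with an odd amount of repeated pair-coverage, which are the vertices charged by \cref{th:d/12}, and the proof works only because the loss $O\ge\frac12(a-|V|)$ and the gain $\sum_i\delta_i\ge\frac16|V|$ balance at the coefficient $\tfrac13$, leaving $\tfrac a6$ after using $|V|\le a$. Verifying that \cref{th:d/12} is strong enough here — that $\sum_i\lceil d_i/6\rceil\ge\frac16\sum_i d_i\ge\frac16|V|$ really does suffice — is the step I would check most carefully, and I would double-check the boundary identity $2(C(a)-C_*(a))=\lceil a/6\rceil$ in each residue class, since the whole reduction hinges on it.
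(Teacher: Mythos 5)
Your proof is correct and takes essentially the same route as the paper's: your $u_e$, the parity graph $\Gamma$, and the exceptional set $V$ are exactly the paper's $m(j,k)$, its set of pairs with $m(j,k)$ odd, and the vertices charged via $d_i(j)$ in its inequality (3), and your doubled balance $\sum_i\delta_i+O\ge\lceil a/6\rceil$ is the paper's bound $f(a,b)\ge b\,C_*(a)+\frac{a}{12}$ followed by the same integrality rounding, since your identity $2\bigl(C(a)-C_*(a)\bigr)=\lceil a/6\rceil$ is equivalent to the paper's closing step $\lceil C_*(a)+\frac{a}{12}\rceil=C(a)$. The only cosmetic differences are your integerized bookkeeping and your invoking \cref{th:d/12} in its rounded form $\lceil d_i/6\rceil$ before weakening to $d_i/6$, which the paper uses directly.
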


\begin{proof}[\bf{Proof}]
Let $A=\{x_1,\ldots,x_a\}$, $B=\{y_1,\ldots,y_b\}$. 
Let ${\cal Q}$ be an $(A,B)$-system of size $f(a,b)$. 
For $i=1,\ldots,b$, 
let ${\cal T}_i$ be the system of triples $\{x',x'',x'''\} \subseteq A$ such that 
$\{x',x'',x''',y_i\} \in {\cal Q}$. 
For $j,k \in [a]$, $i \neq j$, set $\alpha_i(j,k)=0$ 
if the pair $\{x_j,x_k\}$ is covered by the system ${\cal T}_i$, 
otherwise set $\alpha_i(j,k)=1$. 
Set $m(j,k) := \sum_{i=1}^b \alpha_i(j,k)$. 
Then, similarly to the proof of \cref{th:lower}, 
\begin{align}\label{eq:lower1}
  |Q| \:\geq\: \sum_{i=1}^b |{\cal T}_i| \: + \!\!
  \sum_{\{j,k\}\subseteq [a]} \left\lceil\frac{m(j,k)}{2}\right\rceil .
\end{align}
For $j \in [a]$, set $d_i(j)=1$ if there are triples $t',t''\in{\cal T}_i$ 
such that $|t' \cap t''|=2$ and $x_j \in (t' \cap t'')$. 
By \cref{th:d/12}, 
\begin{align}\label{eq:lower2}
  w({\cal T}_i) \:\geq\: C_*(a) + \frac{1}{12}\sum_{j=1}^a d_i(j) \, .
\end{align}
If $d_i(j)=0$, then $\sum_{k \in [a]\backslash\{j\}} \alpha_i(j,k)$ is odd 
(as $a$ is even). 
Since $b$ is odd, 
if $d_i(j)=0$ for all $i=1,\ldots,b$, 
then $\sum_{i=1}^b \sum_{k \in [a]\backslash\{j\}} \alpha_i(j,k)$ is odd. 
Thus, 
\begin{align*}
  \sum_{i=1}^b \sum_{j=1}^a d_i(j) & \: \geq \:
  \left|\left\{j:\: \sum_{i=1}^b \sum_{k \in [a]\backslash\{j\}} \alpha_i(j,k) \equiv 0 \bmod 2\right\}\right|
  \\ & \: = \: a - 
  \left|\left\{j:\: \sum_{i=1}^b \sum_{k \in [a]\backslash\{j\}} \alpha_i(j,k) \equiv 1 \bmod 2\right\}\right|
  .
\end{align*}
If $\sum_{i=1}^b \sum_{k \in [a]\backslash\{j\}} \alpha_i(j,k) \equiv 1 \bmod 2$, 
then there exists $k \neq j$ such that $\sum_{i=1}^b \alpha_i(j,k) \equiv 1 \bmod 2$. 
As $\sum_{i=1}^b \alpha_i(j,k) = m(j,k)$, we get 
\begin{align}\label{eq:lower3}
  \sum_{i=1}^b \sum_{j=1}^a d_i(j) \:\geq\:
  a - 2 |\{\{j,k\}:\: m(j,k) \equiv 1 \bmod 2\}| \, .
\end{align}
Collecting (\ref{eq:lower1}), (\ref{eq:lower2}), and (\ref{eq:lower3}), we get 
\begin{align*}
  f(a,b) & = |{\cal Q}| \geq \sum_{i=1}^b |{\cal T}_i| \: + \!\!
  \sum_{\{j,k\}\subseteq [a]} \left\lceil\frac{m(j,k)}{2}\right\rceil 
  \\ & =
  \sum_{i=1}^b |{\cal T}_i| \: + \!\!
  \sum_{\{j,k\}\in\binom{[a]}{2}} \frac{m(j,k)}{2}
  \, + \, \frac{1}{2} |\{\{j,k\}:\: m(j,k) \equiv 1 \bmod 2\}|
  \\ & =
  \sum_{i=1}^b w({\cal T}_i)
  \, + \, \frac{1}{2} |\{\{j,k\}:\: m(j,k) \equiv 1 \bmod 2\}|
  \\ & \geq
  b\, C_*(a) + \frac{1}{12} \sum_{i=1}^b \sum_{j=1}^a d_i(j)
  \, + \, \frac{1}{2} \, |\{\{j,k\}:\: m(j,k) \equiv 1 \bmod 2\}|
  \\ & \geq
  b\, C_*(a) + \frac{a}{12} 
  + \left(\frac{1}{2} - \frac{1}{12} \cdot 2 \right)
  \cdot |\{\{j,k\}:\: m(j,k) \equiv 1 \bmod 2\}|
  \\ & \geq
  b\, C_*(a) + \frac{a}{12}
  \, .
\end{align*}
As $b$ is odd, $(b-1)C_*(a)$ is integer. 
As $f(a,b)$ is also integer, we get 
\begin{align*}
  f(a,b) & \geq \left\lceil b\,C_*(a) + \frac{a}{12}\right\rceil
  = (b-1)C_*(a) +  \left\lceil C_*(a) + \frac{a}{12}\right\rceil
  \\ & =
  (b-1)C_*(a) + C(a) \, .
\end{align*}
\end{proof}

\section{Upper bounds on $f(a,b)$}\label{sec:upper}

Following the notation in \cite{Etzion:1994}, we denote 
by $\lambda(n,3)$ the maximum number of disjoint optimal $(n,3,2)$-covering systems 
on the same $n$-element set, 
and by $\mu(n,3)$ the minimum number of optimal $(n,3,2)$-covering systems 
on the same element set such that their union contains all $\binom{n}{3}$ triples. 

\begin{proposition}[{\cite{Etzion:1994,Ji:2006}}]\label{th:_mu}
$\mu(a,3)=a-2$ for $a \neq 6,7$, $\mu(6,3)=5$, $\mu(7,3)=6$. 
\end{proposition}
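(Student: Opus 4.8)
The plan is to prove the matching lower and upper bounds separately and then to treat the two sporadic orders $a=6,7$ by hand, since for those the generic bounds are not tight.

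For the lower bound I would use a volume (double-counting) argument. If $\mathcal{C}_1,\dots,\mathcal{C}_m$ are optimal $(a,3,2)$-coverings whose union is the set of all $\binom{a}{3}$ triples, then counting triple-incidences gives $m\cdot C(a) \geq \binom{a}{3}$, hence $m \geq \lceil \binom{a}{3}/C(a)\rceil$. Plugging in the exact value $C(a)=\lceil\frac{a}{3}\lceil\frac{a-1}{2}\rceil\rceil$ from the introduction, a short residue-by-residue computation (separating $a\equiv 1,3\bmod 6$, the case $a\equiv 5\bmod 6$, and even $a$) shows that this ceiling equals $a-2$ for every $a\geq 3$. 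This establishes $\mu(a,3)\geq a-2$ in all cases.

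The harder direction is the construction showing $\mu(a,3)\leq a-2$ for $a\neq 6,7$. The key observation is that when $a\equiv 1,3\bmod 6$ an optimal covering is exactly a Steiner triple system $\mathrm{STS}(a)$, and here $\binom{a}{3}/C(a)=a-2$ exactly; so a family of $a-2$ optimal coverings whose union is everything must meet the volume bound with equality, i.e. be a partition of all triples into pairwise disjoint systems, which is precisely a large set of disjoint $\mathrm{STS}(a)$. Thus for $a\equiv 1,3\bmod 6$ with $a\neq 7$ the statement follows from Teirlinck's theorem on the existence of large sets of Steiner triple systems. For the remaining residues (even $a$, and $a\equiv 5\bmod 6$) optimal coverings are no longer Steiner systems — they carry a fixed small excess of multiply covered pairs — so I would instead build the required families by recursion and by modifying large sets of a nearby Steiner order, adding or deleting a bounded number of points and repairing the covering and excess structure; this is the content of the cited constructions of Etzion–Hartman and Ji. I expect this construction for the non-Steiner residues to be the main obstacle, since one must simultaneously keep every member optimal and still guarantee that the union captures all triples.

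Finally, the two sporadic values require beating the volume bound. For $a=7$ an optimal covering is an $\mathrm{STS}(7)$ with $7$ triples, and $5\cdot 7=\binom{7}{3}=35$, so a union of five optimal coverings equal to all triples would force a partition, i.e. a large set of five pairwise disjoint copies of $\mathrm{STS}(7)$; since no such large set exists (at most two disjoint copies are possible), we get $\mu(7,3)\geq 6$, and an explicit family of six coverings gives equality. For $a=6$ I would argue directly: here $C(6)=6$ and $\binom{6}{3}=20$, and a short case analysis on the three excess pairs of each optimal covering shows that four of them cannot contain all $20$ triples, forcing $\mu(6,3)\geq 5$, with an explicit family of five coverings attaining the bound.
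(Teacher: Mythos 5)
The paper does not prove this proposition at all: it is imported verbatim from \cite{Etzion:1994,Ji:2006}, so there is no internal proof to compare your argument against. Judged on its own, your sketch follows the actual route of the cited literature, and the parts you carry out yourself are correct. The volume bound is sound: with $C(a)=a(a-1)/6$ for $a\equiv 1,3\bmod 6$, $C(a)=\lceil a^2/6\rceil$ for even $a$, and $C(a)=(a^2-a+4)/6$ for $a\equiv 5\bmod 6$, one checks $a-3<\binom{a}{3}/C(a)\le a-2$ in every residue class, so the ceiling equals $a-2$ for all $a\ge 3$ (with equality $\binom{a}{3}/C(a)=a-2$ exactly in the Steiner residues). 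Your reduction for $a\equiv 1,3\bmod 6$, $a\ne 7$, is also right: since $(a-2)\,C(a)=\binom{a}{3}$ there, a family of $a-2$ optimal coverings with full union must be pairwise disjoint, i.e.\ a large set of disjoint Steiner triple systems, whose existence is Lu's theorem as completed by Teirlinck (the paper itself invokes \cite{Teirlinck:1991} for disjoint triple systems in the proof of \cref{th:large}). The $a=7$ lower bound via the classical fact that at most two pairwise disjoint $\mathrm{STS}(7)$ exist is likewise correct.

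What keeps this from being a proof is that for even $a$ and $a\equiv 5\bmod 6$ — the majority of residues and the genuinely hard content of the proposition — you construct nothing and explicitly defer to the cited constructions; similarly, the witnessing families for $\mu(6,3)=5$ and $\mu(7,3)=6$ and the case analysis excluding four coverings at $a=6$ are asserted rather than exhibited. Since the paper treats the statement as a quoted result, deferring exactly where you defer, this is acceptable as a justification of the citation, but it should be presented as such rather than as a self-contained proof. One attribution slip: the covering-set constructions are due to Etzion alone \cite{Etzion:1994} (Etzion--Hartman is a different collaboration, on quadruple systems), with the cases left open there settled by Ji \cite{Ji:2006}.
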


\begin{proposition}[{\cite{Etzion:1994}}]\label{th:_lambda}
$\lambda(a,3)=a-3$ for $a \equiv 0 \bmod 6$. 
\end{proposition}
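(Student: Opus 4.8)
The plan is to prove $\lambda(a,3)\le a-3$ and $\lambda(a,3)\ge a-3$ separately, the first by counting and the second by an explicit construction. For the upper bound, note that for $a\equiv 0\bmod 6$ we have $C(a)=\lceil a^2/6\rceil=a^2/6$, since $6\mid a^2$. If $\mathcal C_1,\dots,\mathcal C_\lambda$ are pairwise disjoint optimal $(a,3,2)$-coverings, they consist of $\lambda\,C(a)$ distinct triples, so $\lambda\,\tfrac{a^2}{6}\le\binom a3=\tfrac{a(a-1)(a-2)}{6}$, whence
\[
  \lambda \;\le\; \frac{(a-1)(a-2)}{a} \;=\; a-3+\frac{2}{a} \;<\; a-2 .
\]
As $\lambda$ is an integer and $a\ge 6$, this gives $\lambda(a,3)\le a-3$.

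For the lower bound I would first reformulate optimality graph-theoretically. The triples of any covering of $A$, read as triangles, partition the edge-multiset $K_a+X$, where the excess multigraph $X$ records how many extra times each pair is covered, and the covering has size $\tfrac13(\binom a2+|E(X)|)$. Minimizing the size means minimizing $|E(X)|$ subject to $K_a+X$ being triangle-decomposable. Since $a$ is even, every vertex of $K_a$ has odd degree $a-1$, so triangle-decomposability forces $X$ to have odd degree at every vertex; the cheapest such $X$ is a perfect matching $M$, and for $a\equiv 0\bmod 6$ the multigraph $K_a+M$ has all degrees even and $\binom a2+\tfrac a2=\tfrac{a^2}{2}$ edges, divisible by $3$. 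As an optimal covering exists (see \cite{Fort:1958}), $K_a+M$ is triangle-decomposable, and the optimal coverings of $A$ are precisely the triangle decompositions of $K_a+M$ over all perfect matchings $M$. The goal thus becomes: produce perfect matchings $M_1,\dots,M_{a-3}$ and triangle decompositions $\mathcal C_k$ of $K_a+M_k$ that are pairwise triple-disjoint.

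This simultaneous construction is the hard part, and I expect it to be the main obstacle. The tempting shortcut through a large set of Steiner triple systems on $a+1$ points fails: such a large set has $a-1$ triple-disjoint systems, and deleting the extra point turns them into $a-1$ triple-disjoint maximum packings, the $k$-th covering $K_a$ minus one factor $M_k$ of a $1$-factorization of $K_a$. These are packings, not optimal coverings, and they cannot be repaired cheaply: if an optimal covering equalled such a packing plus $\tfrac a3$ further triples, those triples would contribute exactly the edges of $M_k\cup M'$ for some perfect matching $M'$, a $2$-regular graph whose components are even cycles; since two edges of any triangle share a vertex, that graph contains no triangle and so is not a union of triangles. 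Hence the $a-3$ coverings must be built directly. I would look for a cyclic or recursive scheme---for instance cyclic base blocks over $\ZZ_{a-1}\cup\{\infty\}$ developed additively, or a product/PBD construction assembling $\lambda(a,3)$ from large sets of smaller order---that emits all $a-3$ decompositions at once and encodes their pairwise disjointness in the orbit structure.

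A sanity check that guides the construction: the $a-3$ coverings use $(a-3)\tfrac{a^2}{6}$ triples and leave exactly $\binom a3-(a-3)\tfrac{a^2}{6}=\tfrac a3$ triples of $A$ unused, matching the slack $\tfrac2a$ in the counting bound above. The construction is therefore maximally tight---the surviving $\tfrac a3$ triples must form a prescribed minimal configuration---which both constrains and helps verify any candidate scheme.
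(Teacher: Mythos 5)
Your proposal is incomplete: it establishes only the inequality $\lambda(a,3)\le a-3$, while the half that carries all the content of the proposition --- the existence of $a-3$ pairwise disjoint optimal $(a,3,2)$-coverings --- is left as an explicitly acknowledged open step. The counting argument for the upper bound is correct ($C(a)=a^2/6$ for $a\equiv 0\bmod 6$, and $\lambda\, a^2/6\le\binom{a}{3}$ gives $\lambda\le a-3+2/a$), and your multigraph reformulation is sound: for even $a$ the excess of an optimal covering must be a perfect matching, so optimal coverings are exactly the triangle decompositions of $K_a+M$. Your observation that the shortcut through a large set of Steiner triple systems on $a+1$ points cannot be repaired is also correct, and correctly identifies why the construction is genuinely hard. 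But ``I would look for a cyclic or recursive scheme'' is a research plan, not a proof; nothing in the proposal produces even one pair of disjoint optimal coverings, let alone $a-3$ of them with the required tightness (only $a/3$ triples of $A$ left unused).

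Note that the paper itself offers no proof here: the proposition is quoted from Etzion (1994), where the construction occupies a dedicated section. Indeed, the paper later relies (in the proof of its Proposition 3.12) on the stronger structural statement from that source: one can choose the $a-3$ disjoint optimal coverings so that the $a/3$ uncovered triples form a parallel class $\{x_1,x_2,x_3\},\{x_4,x_5,x_6\},\ldots,\{x_{a-2},x_{a-1},x_a\}$ --- exactly the ``prescribed minimal configuration'' your sanity check predicts. So your framing is consistent with the known solution, but to close the gap you would either have to reproduce Etzion's construction (or an equivalent one) or cite it, as the paper does.
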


\begin{proposition}\label{th:mu}
If $b \geq \mu(a,3)$ then $f(a,b) \leq b\,C(a)$. 
\end{proposition}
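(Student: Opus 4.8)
The plan is to produce an explicit $(A,B)$-system of size at most $b\,C(a)$ by attaching to each element of $B$ its own optimal $(a,3,2)$-covering system of $A$. Writing $B=\{y_1,\ldots,y_b\}$, the point is that appending $y_i$ to the blocks of a covering system $\mathcal{C}_i$ of $A$ turns every pair in $A$ into a covered triple of the form $\{x',x'',y_i\}$, while letting the union of the $\mathcal{C}_i$ run over all of $\binom{A}{3}$ takes care of the triples lying entirely inside $A$.

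First I would use the defining property of $\mu(a,3)$: since $b\geq\mu(a,3)$, there exist optimal $(a,3,2)$-covering systems $\mathcal{C}_1,\ldots,\mathcal{C}_{\mu(a,3)}$ on $A$ whose union contains every triple of $A$. I would then pad this list to length $b$ by letting $\mathcal{C}_{\mu(a,3)+1},\ldots,\mathcal{C}_b$ be arbitrary optimal covering systems of $A$ (for instance, copies of $\mathcal{C}_1$). Each $\mathcal{C}_i$ has exactly $C(a)$ blocks. I would then define
\[
  \mathcal{Q} \;=\; \bigcup_{i=1}^b \bigl\{\, t\cup\{y_i\} : t\in\mathcal{C}_i \,\bigr\}.
\]
Since the union of $b$ families of size $C(a)$ has at most $b\,C(a)$ members (in fact exactly $b\,C(a)$, as a quadruple $t\cup\{y_i\}$ contains the single $B$-element $y_i$ and so determines $i$), we get $|\mathcal{Q}|\leq b\,C(a)$.

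Next I would check that $\mathcal{Q}$ is an $(A,B)$-system, that is, that it covers every triple $T$ with $|T\cap A|\geq 2$, by splitting into two cases. If $|T\cap A|=3$, then $T\in\binom{A}{3}$, so by the choice of $\mathcal{C}_1,\ldots,\mathcal{C}_{\mu(a,3)}$ there is an index $i$ with $T\in\mathcal{C}_i$, whence $T\subseteq t\cup\{y_i\}\in\mathcal{Q}$. If $|T\cap A|=2$, write $T=\{x',x'',y_i\}$; because $\mathcal{C}_i$ is a covering system of $A$, it contains some triple $t\supseteq\{x',x''\}$, and then $T\subseteq t\cup\{y_i\}\in\mathcal{Q}$.

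There is no serious obstacle here: the entire content lies in recognizing that $\mu(a,3)$ is precisely the quantity guaranteeing that the triples inside $A$ are covered by $b$ optimal covers, while the covering property of each individual $\mathcal{C}_i$ simultaneously handles every triple of the form (pair in $A$) $\cup\,\{y_i\}$. The construction and verification are direct, and the bound $f(a,b)\leq|\mathcal{Q}|\leq b\,C(a)$ follows immediately.
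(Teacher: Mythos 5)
Your proposal is correct and is essentially the paper's own proof: take $b$ optimal $(a,3,2)$-coverings whose union contains every triple of $A$ (possible since $b\geq\mu(a,3)$), append $y_i$ to the blocks of the $i$-th covering, and check both types of triples. Your explicit padding of the list to length $b$ with repeated coverings just makes precise what the paper leaves implicit.
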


\begin{proof}[\bf{Proof}]
Let ${\cal T}_1,\ldots,{\cal T}_b$ be optimal $(a,3,2)$-coverings 
on an $a$-element set $A$ 
such that every triple in $A$ appears as a block in at least one of the coverings. 
Let $B=\{y_1,\ldots,y_b\}$, $B \cap A = \emptyset$. 
Adding $y_i$ to each block of ${\cal T}_i$ for $i=1,\ldots,b$ 
produces a collection of blocks, which is an $(A,B)$-system of size $b\,C(a)$.
\end{proof}

\begin{proposition}\label{th:lambda}
If $b \leq \lambda(a,3)$ then $f(a,b) \leq \binom{a}{3}$. 
\end{proposition}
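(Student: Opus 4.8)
The plan is to combine the construction behind \cref{th:mu} with a short ``patching'' step that mops up the triples of $A$ left uncovered. By the definition of $\lambda(a,3)$, the hypothesis $b \le \lambda(a,3)$ guarantees $b$ pairwise disjoint optimal $(a,3,2)$-coverings ${\cal T}_1,\ldots,{\cal T}_b$ on $A$, where \emph{disjoint} means that no triple is a block of two different ${\cal T}_i$. Write $B=\{y_1,\ldots,y_b\}$. The whole argument rests on a single bookkeeping observation: disjointness forces $\bigl|\bigcup_{i=1}^b {\cal T}_i\bigr| = \sum_{i=1}^b |{\cal T}_i| = b\,C(a)$, so that exactly $\binom{a}{3}-b\,C(a)$ triples of $A$ lie outside the union.

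First I would run the construction of \cref{th:mu}: for each $i$ and each block $t\in{\cal T}_i$, form the quadruple $t\cup\{y_i\}$. Since ${\cal T}_i$ covers every pair of $A$, for each pair $\{x_j,x_k\}\subseteq A$ and each index $i$ there is a block $t\in{\cal T}_i$ with $\{x_j,x_k\}\subseteq t$, so $t\cup\{y_i\}$ contains the triple $\{x_j,x_k,y_i\}$; hence all triples meeting $A$ in exactly two points are covered. These quadruples are pairwise distinct (the unique $B$-element recovers $i$ and the $A$-part recovers $t$), so there are $\sum_i|{\cal T}_i|=b\,C(a)$ of them, and the triples $T\subseteq A$ they cover are exactly those in $\bigcup_{i=1}^b{\cal T}_i$, of which there are $b\,C(a)$ by disjointness.

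It then remains to cover the $\binom{a}{3}-b\,C(a)$ triples of $A$ belonging to no ${\cal T}_i$. For each such $T$ I would add one quadruple $T\cup\{z\}$ with $z\in(A\cup B)\setminus T$ arbitrary (say $z=y_1$ when $b\ge 1$); this covers $T$ and is distinct from every block of the first step. Adding these at most $\binom{a}{3}-b\,C(a)$ blocks to the $b\,C(a)$ blocks already present yields an $(A,B)$-system of size at most $b\,C(a)+\bigl(\binom{a}{3}-b\,C(a)\bigr)=\binom{a}{3}$, giving $f(a,b)\le\binom{a}{3}$. I do not expect a genuine obstacle here; the only points needing care are confirming that every required triple (both those meeting $A$ in two points and those contained in $A$) is covered and that no block is reused, and both follow directly from disjointness once the count $\bigl|\bigcup_i{\cal T}_i\bigr|=b\,C(a)$ is in hand.
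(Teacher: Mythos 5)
Your proof is correct and takes essentially the same approach as the paper: take $b$ pairwise disjoint optimal $(a,3,2)$-coverings, append $y_i$ to the blocks of ${\cal T}_i$, and patch the $\binom{a}{3}-b\,C(a)$ triples of $A$ left uncovered with one extra quadruple apiece. The only cosmetic difference is that the paper's patching quadruples are taken inside $A$ while you adjoin $y_1$; the count, and hence the bound $f(a,b)\le\binom{a}{3}$, is identical either way.
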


\begin{proof}[\bf{Proof}]
Let ${\cal T}_1,\ldots,{\cal T}_b$ be disjoint optimal $(a,3,2)$-coverings 
on an $a$-element set $A$. 
There are $m = \binom{a}{3} - b\,C(a)$ triples in $A$ that do not appear as blocks 
in any of the coverings. 
There is a system $\cal{Q}$ of at most $m$ quadruples on $A$ that 
cover these $m$ triples. 
Let $B=\{y_1,\ldots,y_b\}$, $B \cap A = \emptyset$. 
Add $y_i$ to each block of ${\cal T}_i$ for $i=1,\ldots,b$. 
The union of the resulting quadruple systems together with the system $\cal{Q}$ 
is an $(A,B)$-system whose size does not exceed $b\,C(a) + m = \binom{a}{3}$. 
\end{proof}

\begin{proposition}\label{th:+1}
$f(a,b+1) \leq f(a,b) + C(a)$. 
\end{proposition}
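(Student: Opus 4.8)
The plan is to build an $(A,B\cup\{y_{b+1}\})$-system directly from an optimal $(A,B)$-system by adjoining a small collection of quadruples that handle only the newly introduced triples. First I would fix an optimal $(A,B)$-system ${\cal Q}$ with $|{\cal Q}| = f(a,b)$, where $B = \{y_1,\ldots,y_b\}$ and $B \cap A = \emptyset$, and introduce a new point $y_{b+1} \notin A \cup B$. Setting $B' := B \cup \{y_{b+1}\}$, the task reduces to covering exactly those triples $T \subseteq A \cup B'$ with $|T \cap A| \geq 2$ that are not already covered by ${\cal Q}$.

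The key observation is that every triple required by an $(A,B')$-system but not by an $(A,B)$-system must contain $y_{b+1}$: any $T \subseteq A \cup B$ with $|T \cap A| \geq 2$ is already covered, so a genuinely new triple satisfies $y_{b+1} \in T$. Since $|T \cap A| \geq 2$ together with $y_{b+1} \in T$ forces $|T \cap A| = 2$, the new triples are precisely those of the form $\{x',x'',y_{b+1}\}$ with $\{x',x''\} \subseteq A$. To cover all of these at once, I would take an optimal $(a,3,2)$-covering system ${\cal T}$ on $A$, which has exactly $C(a)$ blocks, and adjoin $y_{b+1}$ to each of its blocks. For any pair $\{x',x''\} \subseteq A$ there is a triple $t \in {\cal T}$ containing it, and then the quadruple $t \cup \{y_{b+1}\}$ contains $\{x',x'',y_{b+1}\}$, as required.

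Finally I would check that the union of ${\cal Q}$ with these $C(a)$ new quadruples is an $(A,B')$-system: triples lying in $A$, or of the form $\{x',x'',y_i\}$ with $y_i \in B$, are already covered by ${\cal Q}$, while the triples $\{x',x'',y_{b+1}\}$ are covered by the adjoined blocks. The resulting system has size at most $f(a,b) + C(a)$, yielding $f(a,b+1) \leq f(a,b) + C(a)$. There is no serious obstacle in this argument; the only point demanding care is the short case analysis confirming that no required triple is overlooked, and in particular that a triple containing $y_{b+1}$ can meet $A$ in at most two points and hence is always of the covered form.
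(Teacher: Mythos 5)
Your proposal is correct and follows essentially the same route as the paper: both augment an optimal $(A,B)$-system with the $C(a)$ quadruples obtained by adjoining the new point $y_{b+1}$ to the blocks of an optimal $(a,3,2)$-covering on $A$. Your explicit verification that every newly required triple has the form $\{x',x'',y_{b+1}\}$ with $\{x',x''\}\subseteq A$ is just the detail the paper leaves as ``easy to see,'' so nothing is missing and nothing differs in substance.
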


\begin{proof}[\bf{Proof}]
Let $|A|=a$, $|B|=b$, and ${\cal Q}$ be an $(A,B)$-system of size $f(a,b)$. 
Let $B'=B \cup \{y\}$ where $y \notin B$. 
Add to ${\cal Q}$ the $C(a)$ quadruples $T \cup {y}$ 
where $T$ belongs to an optimal covering system of triples on $A$. 
It is easy to see that the resulting system of quadruples is an $(A,B')$-system. 
\end{proof}

\begin{proposition}\label{th:+2}
$f(a,b+2) \leq f(a,b) + 2C_*(a)$. 
\end{proposition}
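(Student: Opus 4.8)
The plan is to mimic the one-point construction of \cref{th:+1}, but to add the two new points simultaneously so that the cost of the pairs left uncovered by a triple system on $A$ is shared between them. Concretely, start from an $(A,B)$-system $\mathcal{Q}$ of size $f(a,b)$, and let $B' = B \cup \{y,z\}$ with $y,z \notin B$. The only triples $T \subseteq A \cup B'$ with $|T \cap A| \ge 2$ that are not already covered by $\mathcal{Q}$ are those of the form $\{x',x'',y\}$ and $\{x',x'',z\}$ for pairs $\{x',x''\} \subseteq A$; any triple meeting $\{y,z\}$ in two points has at most one element in $A$ and so needs no covering. Thus it suffices to add a collection of quadruples covering these two families of triples while keeping the total cost at $2C_*(a)$.

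First I would fix a triple system $\mathcal{T}$ on $A$ of minimum weight, so that $w(\mathcal{T}) = |\mathcal{T}| + \tfrac12 p = C_*(a)$, where $p$ denotes the number of pairs of $A$ not covered by $\mathcal{T}$. Adding $y$ to every block of $\mathcal{T}$ produces $|\mathcal{T}|$ quadruples covering every triple $\{x',x'',y\}$ whose underlying pair is covered by $\mathcal{T}$, and adding $z$ to every block produces a further $|\mathcal{T}|$ quadruples doing the same for $z$.

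The key step is the treatment of the $p$ uncovered pairs: for each pair $\{x',x''\}$ not covered by $\mathcal{T}$, I would add the single quadruple $\{x',x'',y,z\}$. This one quadruple contains both $\{x',x'',y\}$ and $\{x',x'',z\}$, so a single block discharges both obligations for that pair. Together with the two families above, every required triple involving $y$ or $z$ is now covered, and the union with $\mathcal{Q}$ is an $(A,B')$-system.

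Finally I would count: the number of quadruples added is $2|\mathcal{T}| + p = 2\bigl(|\mathcal{T}| + \tfrac12 p\bigr) = 2C_*(a)$, giving $f(a,b+2) \le f(a,b) + 2C_*(a)$. There is no serious obstacle here beyond spotting the sharing trick; the one point to verify carefully is that routing each uncovered pair through the joint quadruple $\{x',x'',y,z\}$—rather than through two separate blocks, as a naive double application of \cref{th:+1} would do—is exactly what converts two full-weight contributions into the half-weights that are bundled into $C_*(a)$.
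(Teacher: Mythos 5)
Your proposal is correct and is essentially identical to the paper's own proof: both extend the $(A,B)$-system by appending each of the two new points to the blocks of a minimum-weight triple system $\mathcal{T}$ on $A$ and then cover each uncovered pair with a single shared quadruple containing both new points, for a total added cost of $2|\mathcal{T}| + p = 2C_*(a)$. Your extra remark that triples meeting the two new points twice have at most one element in $A$ (and hence need no covering) is a correct verification the paper leaves implicit.
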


\begin{proof}[\bf{Proof}]
Let $|A|=a$, $|B|=b$, and ${\cal Q}$ be an $(A,B)$-system of size $f(a,b)$. 
Let $B'=B \cup \{y_1,y_2\}$ where $y_1,y_2 \notin B$. 
Let ${\cal T}$ be a system of triples on $A$ of the minimum weight $C_*(a)$. 
Let ${\cal P}$ be a system of the pairs from $A$ that are not covered by ${\cal T}$. 
Add to ${\cal Q}$ quadruples
$T \cup \{y_i\}$ where $T \in {\cal T}$, $i=1,2$, 
and $P \cup \{y_1,y_2\}$ where $P \in {\cal P}$. 
It is easy to see that the resulting system of quadruples is an $(A,B')$-system 
of size 
$f(a,b) + 2\left(|{\cal T}| + \frac{1}{2} |{\cal P}|\right) = f(a,b) + 2C_*(a)$. 
\end{proof}

\begin{proposition}\label{th:2-6mod12}
\begin{align*}
  f(a,a) \:\leq\: a\,C_*(a) \:=\: \frac{2a^3 - a^2}{12} 
  \;\;\;\;{\rm for}\; a \equiv 2,6 \bmod 12
  \, .
\end{align*}
\end{proposition}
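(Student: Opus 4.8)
The plan is to treat this as a purely constructive statement: the inequality $f(a,a)\ge a\,C_*(a)$ is already furnished by \cref{th:lower}, so it remains to exhibit an $(A,B)$-system of size exactly $a\,C_*(a)=\frac{2a^3-a^2}{12}$ when $|A|=|B|=a$ and $a\equiv 2,6\bmod 12$.

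First I would reverse the lower-bound bookkeeping of \cref{th:lower} to isolate a clean sufficient condition. Writing $B=\{y_1,\dots,y_a\}$, it suffices to produce $a$ triangle packings $\mathcal T_1,\dots,\mathcal T_a$ on $A$, each a maximum packing whose leave $M_i$ is a perfect matching (so $|\mathcal T_i|=\frac{a(a-2)}{6}$ and $w(\mathcal T_i)=C_*(a)$), subject to two requirements: (i) every triple of $A$ lies in at least one $\mathcal T_i$, and (ii) every pair of $A$ lies in an even number of the leaves $M_i$. From such a family I build the system by taking all quadruples $T\cup\{y_i\}$ with $T\in\mathcal T_i$, and, for each pair $\{x,x'\}$, pairing off the evenly many indices $i$ with $\{x,x'\}\in M_i$ and adding $\{x,x',y_i,y_j\}$ for each matched couple $(i,j)$. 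Condition (i) covers every triple inside $A$, while a triple $\{x,x',y_i\}$ is covered either because $\{x,x'\}$ is covered by $\mathcal T_i$ or because $\{x,x'\}\in M_i$ and was matched to some $M_j$. The block count is $\sum_i|\mathcal T_i|+\tfrac12\sum_i|M_i|=\sum_i w(\mathcal T_i)=a\,C_*(a)$, matching the lower bound.

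For the construction I would exploit that $n:=a/2\equiv 1,3\bmod 6$, so a Steiner triple system on $2n+1$ points exists; deleting one point yields a $3$-GDD of type $2^n$, i.e.\ a maximum packing of $K_{2n}$ whose leave is the matching formed by its groups. Writing $A=L\cup R$ with $|L|=|R|=n$, I fix a $1$-factorization $N_1,\dots,N_n$ of the complete bipartite graph $K_{n,n}$ between $L$ and $R$. For each $j$ I let $\mathcal T_{2j-1}$ be a $3$-GDD of type $2^n$ whose groups are exactly the edges of $N_j$ (obtained by relabelling the STS construction), and set $\mathcal T_{2j}=\phi_j(\mathcal T_{2j-1})$, where $\phi_j$ swaps the two endpoints inside every edge of $N_j$. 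Then $M_{2j-1}=M_{2j}=N_j$, so each edge of $K_{n,n}$ lies in exactly two leaves and every other pair in none, which gives (ii); the $2$-in-$B$ quadruples become $\{x,x',y_{2j-1},y_{2j}\}$ for $\{x,x'\}\in N_j$.

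The heart of the argument, and the step I expect to be the main obstacle, is verifying (i). The key observation is that a $3$-GDD of type $2^n$ restricted to any three of its groups is a triangle decomposition of the octahedron $K_{2,2,2}$, of which there are exactly two, interchanged by the antipodal map that swaps each group's two points; hence $\mathcal T_{2j-1}$ and $\mathcal T_{2j}=\phi_j(\mathcal T_{2j-1})$ together realize all eight transversals of every triple of $N_j$-groups. It then remains to check that every triple of $A$ is a transversal of three distinct $N_j$-groups for at least one $j$: a triple lying inside $L$ or inside $R$ is such a transversal for every $j$, while a triple of shape $(2,1)$ has exactly two cross pairs, each contained in a single $N_j$, so it is a transversal for the remaining $n-2\ge 1$ values of $j$ (the case $a=2$ being trivial, as $A$ then contains no triples). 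This establishes (i) and completes the construction; I would finish by recording the arithmetic identity $a\,C_*(a)=\frac{2a^3-a^2}{12}$, valid precisely because $a\equiv 2,6\bmod 12$ forces $C_*(a)=\frac{2a^2-a}{12}$.
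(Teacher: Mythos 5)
Your reduction in the first two paragraphs is sound: producing $a$ maximum packings $\mathcal T_1,\dots,\mathcal T_a$, each of weight $C_*(a)$ with a perfect-matching leave, whose union contains every triple of $A$ and whose leaves cover every pair an even number of times, would indeed yield an $(A,B)$-system of size exactly $a\,C_*(a)$, matching \cref{th:lower}. The fatal gap is the ``key observation'' you use to verify condition (i). It is false that a $3$-GDD of type $2^n$ restricted to three of its groups is a triangle decomposition of $K_{2,2,2}$: the unique block through a cross pair $\{x,y\}$ with $x\in G_1$, $y\in G_2$ has its third point in \emph{some} group $G_k\neq G_1,G_2$, but then for every other group $G_m\neq G_k$ the restriction to $\{G_1,G_2,G_m\}$ fails to cover this edge internally, so for $n>3$ the restriction is a proper partial system for most group-triples. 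A counting argument kills the conclusion outright: every block of $\mathcal T_{2j-1}\cup\mathcal T_{2j}$ is a transversal of exactly one triple of $N_j$-groups, so these two packings contain only $2\cdot\frac{2n(n-1)}{3}=\frac{4n(n-1)}{3}$ blocks in total, whereas realizing ``all eight transversals of every triple of $N_j$-groups'' would require $8\binom{n}{3}=\frac{4n(n-1)(n-2)}{3}$ distinct blocks. These quantities agree only at $n=3$, i.e.\ $a=6$ (STS$(7)$ minus a point, where all four remaining blocks do decompose the octahedron); for every larger $a\equiv 2,6\bmod 12$ your pair of flipped GDDs necessarily leaves transversal triples uncovered, and nothing in the argument guarantees that such a triple is rescued by a different value of $j$.

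What conditions (i) and (ii) jointly demand is a large-set-type object: $2n$ maximum packings with prescribed matching leaves whose union is all of $\binom{A}{3}$, where the total capacity $2n\cdot\frac{2n(n-1)}{3}$ exceeds $\binom{2n}{3}$ only by the factor $\frac{2n}{2n-1}$, so almost no multiplicity is allowed; a two-element flip acting on $n$ base GDDs is far too rigid to achieve this near-disjoint near-cover. The paper sidesteps the difficulty entirely: it takes a Steiner quadruple system $S(3,4,n+1)$ (Hanani) on $\{0,1,\dots,n\}$ and performs a doubling construction with parity constraints $i_1+i_2+i_3+i_4\equiv 0\bmod 2$, producing the quadruples of the $(A,B)$-system directly (32 blocks per ordinary block of the SQS, $24+6$ per block through $n$, plus $n$ special blocks) rather than slicing the system by the elements of $B$. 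To salvage your route you would need an actual existence theorem for such large sets of maximum packings with prescribed leaves, which is not available off the shelf.
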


\begin{proof}[\bf{Proof}]
Set $n=a/2$. 
Let ${\cal Q}$ be a Steiner $(3,4,n+1)$-system 
(its existence for $n+1 \equiv 2,4 \bmod 6$ is proved in \cite{Hanani:1960}). 
We will use it to construct an $(A,B)$-system  ${\cal Q}'$ of size $(2a^3 - a^2)/12$ 
where $A=\{0,\ldots,a-1\}$, $B=\{a,\ldots,2a-1\}\,$. 

We assume that the elements of ${\cal Q}$ are $0,1,\ldots,n$. 
We distinguish two types of blocks in ${\cal Q}$: 
ones that contain $n$ and ones that do not. 
For each block $\{z_1,z_2,z_3,z_4\}\in{\cal Q}$ that does not contain $n$, 
we add to ${\cal Q}'$ the following $32$ blocks 
with $i_1,i_2,i_3,i_4\in\{0,1\}$ and $i_1+i_2+i_3+i_4 \equiv 0 \bmod 2$: 
\begin{align*}
\{ z_1 + i_1 n,\, z_2 + i_2 n,\, z_3 + i_3 n,\, 2n + z_4 + i_4 n \}, 
\\
\{ z_1 + i_1 n,\, z_2 + i_2 n,\, z_4 + i_4 n,\, 2n + z_3 + i_3 n \}, 
\\
\{ z_1 + i_1 n,\, z_3 + i_3 n,\, z_4 + i_4 n,\, 2n + z_2 + i_2 n \}, 
\\
\{ z_2 + i_2 n,\, z_4 + i_4 n,\, z_4 + i_4 n,\, 2n + z_1 + i_1 n \}. 
\end{align*}
For each block $\{z_1,z_2,z_3,n\}\in{\cal Q}$, 
we add to ${\cal Q}'$ the following $24$ blocks 
with $i_1,i_2,i_3,i_4\in\{0,1\}$, $i_1+i_2+i_3+i_4 \equiv 0 \bmod 2$, 
and $k\in\{1,2,3\}$: 
\begin{align*}
\{ z_1 + i_1 n,\, z_2 + i_2 n,\, z_3 + i_3 n,\, 2n + z_k + i_4 n \}, 
\end{align*}
as well as the $6$ blocks
\begin{align*}
\{ z_i,\, z_j + n,\, 2n + z_k,\, 2n + z_k + n \}, 
\end{align*}
where $(i,j,k)$ is a permutation of $(1,2,3)$. 
Finally, we add to ${\cal Q}'$ the $n$ blocks 
\begin{align*}
\{ i,\, i+n,\, 2n+i,\, 2n+i+n \} 
\end{align*}
with $i=0,1,\ldots,n-1$. 

Using the fact that every triple from $\{0,1,\ldots,n\}$ 
is contained in one of the blocks of ${\cal Q}$, 
it is easy to check that ${\cal Q}'$ is an $(A,B)$-system. 
As $|{\cal Q}| = (n^3-n)/24$ and 
the number of blocks in ${\cal Q}$ that contain $n$ is 
$4|{\cal Q}|/(n+1) = (n^2-n)/6$, we get
\begin{align*}
  |{\cal Q'}| & = \left(\frac{n^3-n}{24} - \frac{n^2-n}{6}\right) \cdot 32
               \: + \: \frac{n^2-n}{6} \cdot (24+6) \: + \: n
  \\ & = \frac{4n^3 - n^2}{3} \: = \: \frac{2a^3 - a^2}{12} \, .
\end{align*}
\end{proof}

\begin{lemma}\label{th:pairs}
Let $A \cap B = \emptyset$, $|A|=a$, $|B|=b$. 
Let ${\cal Q}$ be a system of quadruples on $A \cup B$ such that 
every triple from $A$ is contained in some of the quadruples. 
For $\{x',x''\} \subseteq A$ let $U(x',x'')$ denote the set of elements $y$ from $B$ 
such that $\{x',x'',y\}$ is not contained in any blocks of ${\cal Q}$. 
Then 
\begin{align*}
  f(a,b) \;\leq\; |{\cal Q}| + \!\! \sum_{\{x',x''\} \subseteq\ A} 
    \left\lceil\frac{|U(x',x'')|}{2} \right\rceil \, .
\end{align*}
\end{lemma}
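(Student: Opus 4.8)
The plan is to augment the given system ${\cal Q}$ into an $(A,B)$-system by adding just enough quadruples, each using two elements of $B$, to cover precisely the triples of the form $\{x',x'',y\}$ that ${\cal Q}$ still misses. The bound then follows by counting the added quadruples.

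First I would classify the triples $T \subseteq A \cup B$ with $|T \cap A| \geq 2$ into two kinds: those with $T \subseteq A$, and those of the form $\{x',x'',y\}$ with $\{x',x''\} \subseteq A$ and $y \in B$. By hypothesis ${\cal Q}$ already contains every triple of the first kind in some block. For the second kind, a triple $\{x',x'',y\}$ fails to lie in a block of ${\cal Q}$ exactly when $y \in U(x',x'')$; hence the only triples of interest not yet covered are the $\{x',x'',y\}$ with $y \in U(x',x'')$, ranging over all pairs $\{x',x''\} \subseteq A$.

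Next, for each fixed pair $\{x',x''\} \subseteq A$, I would cover its missing triples by partitioning the set $U(x',x'')$ into pairs, leaving one singleton if $|U(x',x'')|$ is odd. To each pair $\{y_1,y_2\}$ I associate the quadruple $\{x',x'',y_1,y_2\}$, which contains both $\{x',x'',y_1\}$ and $\{x',x'',y_2\}$; to a leftover singleton $\{y\}$ I associate any quadruple $\{x',x'',y,z\}$ with $z$ chosen arbitrarily in $(A \cup B) \setminus \{x',x'',y\}$. This uses exactly $\lceil |U(x',x'')|/2 \rceil$ quadruples and covers every missing triple attached to $\{x',x''\}$.

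Finally, let ${\cal Q}'$ be the union of ${\cal Q}$ with all the quadruples added above. By construction every triple $T$ with $|T \cap A| \geq 2$ lies in a block of ${\cal Q}'$: triples inside $A$ by the hypothesis on ${\cal Q}$, triples $\{x',x'',y\}$ with $y \notin U(x',x'')$ by the definition of $U$, and the remaining triples $\{x',x'',y\}$ with $y \in U(x',x'')$ by the added quadruples. Thus ${\cal Q}'$ is an $(A,B)$-system of size at most $|{\cal Q}| + \sum_{\{x',x''\} \subseteq A} \lceil |U(x',x'')|/2 \rceil$, which is the claimed bound on $f(a,b)$. There is no real obstacle here; the only point needing a moment's care is the odd case, where the filler element $z$ must be permitted to be arbitrary — this is harmless, since adding any extra quadruple can only cover more triples and never destroys the covering property.
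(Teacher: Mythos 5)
Your proof is correct and follows essentially the same route as the paper: partition each set $U(x',x'')$ into pairs (plus a possible leftover singleton with an arbitrary filler element), add the corresponding quadruples $\{x',x'',y_1,y_2\}$, and count $\lceil |U(x',x'')|/2\rceil$ new blocks per pair. Your write-up merely makes explicit the case analysis of triples that the paper leaves implicit.
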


\begin{proof}[\bf{Proof}]
For each pair $\{x',x''\} \subseteq\ A$, add to ${\cal Q}$ 
quadruples $\{x',x'',y_{2i-1},y_{2i}\}$ where $2 \leq 2i \leq k=|U(x',x'')|$ 
and $U(x',x'') = \{y_1,y_2,\ldots,y_k\}$. 
If $k$ is odd, also add $\{x',x'',y_k,z\}$ where $z$ is an element 
of $A \cup B$ distinct from $x',x'',y_k$. 
Then the resulting system of quadruples is an $(A,B)$-system of the required size. 
\end{proof}

\begin{lemma}\label{th:pairs4}
For $a \equiv 4 \bmod 6$, there is a system of $(a^2 - 2a + 4)/6$ triples on $[a]$ 
that leaves uncovered only $(a-2)/2$ disjoint pairs.
\end{lemma}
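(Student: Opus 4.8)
The plan is to obtain the desired system from a maximum packing by adjoining a single triple. Since $a \equiv 4 \bmod 6$, \cref{th:packing} gives $P(a) = \left\lfloor\frac{a}{3}\cdot\frac{a-2}{2}\right\rfloor = \frac{a^2-2a-2}{6}$, because $a^2-2a \equiv 2 \bmod 6$. The target count $(a^2-2a+4)/6$ is therefore exactly $P(a)+1$, so I would first fix an optimal packing ${\cal T}_0$ on $[a]$ and study its \emph{leave} graph $L$, i.e.\ the graph on $[a]$ whose edges are the pairs left uncovered by ${\cal T}_0$.

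The heart of the argument is pinning down the shape of $L$. Because $a$ is even, each vertex $x$ lies in $a-1$ pairs (an odd number), of which an even number $2r_x$ are covered by ${\cal T}_0$, where $r_x$ is the number of blocks through $x$; hence every vertex has odd degree in $L$. The number of edges of $L$ is $\binom{a}{2} - 3P(a) = \frac{a+2}{2}$, so the degree sum equals $a+2$. Since all $a$ degrees are odd, they are each at least $1$; as they sum to $a+2$, exactly one vertex has degree $3$ and the other $a-1$ have degree $1$. Thus $L$ is a tripole: a star $K_{1,3}$ with center $c$ and leaves $u_1,u_2,u_3$, together with a perfect matching $M$ of size $\frac{a-4}{2}$ on the remaining $a-4$ vertices.

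I would then simply add the triple $\{c,u_1,u_2\}$ to ${\cal T}_0$. This triple is genuinely new, since its pairs $\{c,u_1\}$ and $\{c,u_2\}$ are uncovered in ${\cal T}_0$; adjoining it covers these two leave-edges and covers the pair $\{u_1,u_2\}$ a second time. The enlarged system has $P(a)+1 = (a^2-2a+4)/6$ triples, and its set of uncovered pairs is exactly $\{c,u_3\}$ together with $M$. This is a matching of size $1 + \frac{a-4}{2} = \frac{a-2}{2}$ consisting of pairwise disjoint pairs (it saturates every vertex except $u_1$ and $u_2$), which is precisely what the lemma asks for.

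The only substantive step is the structural claim about $L$, and even that reduces to the parity observation that the leave of a maximum packing on an even point set has all odd degrees; the edge count then forces the tripole shape with no alternative. The existence of the optimal packing itself is furnished by \cref{th:packing}. I anticipate no real obstacle beyond carefully justifying that exactly one vertex has degree $3$, and this is exactly where the congruence $a \equiv 4 \bmod 6$ (as opposed to $a \equiv 0,2 \bmod 6$) enters: it is what makes $L$ carry $\frac{a}{2}+1$ edges rather than a bare perfect matching of $\frac{a}{2}$ edges, producing the extra degree-$3$ vertex that we exploit.
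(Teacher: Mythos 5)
Your proof is correct, and the construction is the same as the paper's: take an optimal packing and adjoin one triple covering two edges of the $K_{1,3}$ in its leave. The only difference is in how the leave structure is justified. The paper simply cites Spencer's Theorem~3, which states outright that the leave of an optimal packing for $a \equiv 4 \bmod 6$ is $K_{1,3}$ plus $(a-4)/2$ independent edges; you instead derive this from the packing number alone (\cref{th:packing}) via the parity argument that every vertex has odd leave-degree while the edge count $\binom{a}{2}-3P(a)=\frac{a+2}{2}$ forces exactly one vertex of degree $3$. Your route is self-contained, needing only Spencer's Theorem~1 rather than his Theorem~3, and it in fact shows the tripole shape is forced for \emph{every} maximum packing, not just some specially constructed one; the paper's citation is shorter but leans on the stronger external result. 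All the supporting computations in your write-up ($P(a)=\frac{a^2-2a-2}{6}$ since $a^2-2a\equiv 2\bmod 6$, the degree parity $a-1-2r_x$, and the final count $1+\frac{a-4}{2}=\frac{a-2}{2}$ disjoint pairs) check out.
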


\begin{proof}[\bf{Proof}]
By \cite[Theorem~3]{Spencer:1968}, an optimal packing of triples on $[a]$ 
is a system of $(a^2 - 2a - 2)/6$ triples 
where the set of uncovered pairs is $K_{1,3}$ plus $(a-4)/2$ independent edges. 
Add to that system a triple which covers two of the edges of this $K_{1,3}$. 
\end{proof}

\begin{proposition}\label{th:0-4mod12}
\begin{align*}
  f(a,a-1) & \:\leq\: \frac{4a^3 - 5a^2}{24} 
  \;\;\;\;{\rm for}\; a \equiv 0 \bmod 12,
  \; a > 12,
  \\
  f(a,a-1) & \:\leq\: \frac{4a^3 - 5a^2 + 16}{24} 
  \;\;\;\;{\rm for}\; a \equiv 4 \bmod 12.
\end{align*}
\end{proposition}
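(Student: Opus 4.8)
The plan is to invoke \cref{th:pairs}: it is enough to build a quadruple system $\cal Q$ that already covers every triple inside $A$, and then to control $\sum_{\{x',x''\}\subseteq A}\lceil |U(x',x'')|/2\rceil$. I would use only blocks with three points in $A$ and one in $B$, so that $\cal Q$ is recorded by triple systems ${\cal T}_1,\dots,{\cal T}_{a-1}$ on $A$, the blocks through $y_i$ being $\{T\cup\{y_i\}:T\in{\cal T}_i\}$. Writing $M_i$ for the set of pairs left uncovered by ${\cal T}_i$, we have $|{\cal Q}|=\sum_i|{\cal T}_i|$ and $|U(x',x'')|=|\{i:\{x',x''\}\in M_i\}|$, so \cref{th:pairs} gives
\begin{align*}
  f(a,a-1)\;\le\;\sum_{i=1}^{a-1}|{\cal T}_i|\;+\!\!\sum_{\{x',x''\}\subseteq A}\Big\lceil\tfrac12\,|U(x',x'')|\Big\rceil .
\end{align*}
If every pair lies in an even number of the $M_i$, the ceilings disappear and the second sum equals $\tfrac12\sum_i|M_i|$, which is the regime I will aim for.

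For $a\equiv0\bmod12$ I would take the first $\frac{a-2}{2}$ of the systems to be mutually disjoint optimal $(a,3,2)$-coverings; these exist because $\lambda(a,3)=a-3\ge\frac{a-2}{2}$ by \cref{th:_lambda}. A covering meets every pair, so it puts nothing into any $U(x',x'')$, and the $\frac{a-2}{2}$ coverings account for $\frac{a-2}{2}C(a)$ blocks. The remaining $\frac a2$ systems I would take to be optimal packings (each with a perfect-matching leave), arranged in $\frac a4$ pairs---possible since $a\equiv0\bmod4$---so that the two packings of a pair are distinct but share the \emph{same} leave. Then every pair of $A$ lies in an even number of leaves, and the bound becomes
\begin{align*}
  f(a,a-1)\;\le\;\frac{a-2}{2}\,C(a)+\frac a2\,P(a)+\frac{a^2}{8}\;=\;\frac{a^2(a-2)}{6}+\frac{a^2}{8}\;=\;\frac{4a^3-5a^2}{24},
\end{align*}
exactly as claimed, \emph{provided} the coverings and packings together contain all $\binom a3$ triples.

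For $a\equiv4\bmod12$ the clean matching leave is unavailable and \cref{th:_lambda} no longer applies, so I would instead build all (or all but a few) of the systems from the weight-optimal families of \cref{th:pairs4}, whose leave is a set of $\frac{a-2}{2}$ disjoint pairs; these again have weight $C_*(a)$ and an even, matchable leave, and the small boundary discrepancy---the two leave-free vertices and the loss of the disjoint-covering tool---is what I expect to surface as the additive $\tfrac{16}{24}$. The step I expect to be the real difficulty is the proviso above: after the $\frac{a-2}{2}$ disjoint coverings all but $\frac{a(a-2)^2}{12}$ triples are used up, while the $\frac a2$ packings carry only $\frac{a(a-2)}{6}$ blocks of slack beyond covering that remainder, so the coverings and the \emph{paired} packings must be selected jointly to exhaust $\binom a3$ without breaking the leave-pairing. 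Producing such compatible families---by an explicit or recursive design construction, which is presumably where the hypothesis $a>12$ is needed---is the crux; the arithmetic above shows that once they are in hand the two bounds follow at once.
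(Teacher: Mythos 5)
Your reduction to \cref{th:pairs} and the parity bookkeeping are sound, and for $a \equiv 0 \bmod 12$ your arithmetic does reproduce $(4a^3-5a^2)/24$: with $\frac{a-2}{2}$ coverings and $\frac{a}{2}$ packings whose leaves pair up evenly, $\frac{a-2}{2}C(a)+\frac{a}{2}P(a)+\frac{a^2}{8}$ is indeed the claimed bound. But the proof has a genuine gap, and it is precisely the step you yourself flag as the crux: you need the $\frac{a-2}{2}$ disjoint optimal coverings and the $\frac{a}{2}$ leave-paired optimal packings to \emph{jointly exhaust} all $\binom{a}{3}$ triples, with total slack of only $\frac{a(a-2)}{6}$ repeated blocks. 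Nothing in the literature the paper cites supplies such a family: \cref{th:_lambda} gives disjoint coverings with no control over which triples remain uncovered, and large sets of optimal packings with prescribed (pairwise equal) matching leaves covering a prescribed residual triple set are not known. For $a \equiv 4 \bmod 12$ the situation is worse: if all $a-1$ systems came from \cref{th:pairs4} with evenly paired leaves, your count would give $(4a^3-6a^2+6a-4)/24$, which is \emph{smaller} than the bound being proved once $a$ is large --- a strong signal that the design family you are hoping for does not exist (the slack there is only $\frac{2(a-1)}{3}$ blocks), and in any case you offer no construction, only the expectation that an additive $\frac{16}{24}$ ``surfaces.''

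The paper avoids this existence problem entirely by working at half scale. Set $n=a/2$ and take $n-1$ pairwise disjoint Steiner triple systems on the $(n+1)$-set $X=\{0,\ldots,n\}$; these exist for $n+1\equiv 1,3 \bmod 6$, $n+1\neq 7$, which is where the hypothesis $a>12$ actually enters (it excludes $n+1=7$), not the small-case obstruction you conjectured. Since every triple of $X$ lies in \emph{exactly one} of these systems, the exhaustion property you would have to engineer comes for free. A doubling construction --- blocks $\{z_1+i_1n,\,z_2+i_2n,\,z_3+i_3n,\,2n+j+i_4(n-1)\}$ with $i_1+i_2+i_3+i_4$ even, plus special blocks for triples of ${\cal T}_j$ through $j$ --- yields $a(a-1)(a-2)/6$ quadruples covering all triples inside $A$ and all cross triples except those on the $n$ diagonal pairs $\{i,i+n\}$, each missing an odd number $n-1$ of elements of $B$. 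Because the uncovered cross triples concentrate on a perfect matching, one further triple system with exactly that matching as its leave (from \cite{Spencer:1968} when $a\equiv 0 \bmod 12$, from \cref{th:pairs4} when $a\equiv 4 \bmod 12$, the latter accounting for the $+16$) is appended to a single new element of $B$, and \cref{th:pairs} finishes with $n\cdot\frac{n}{2}=\frac{a^2}{8}$ extra quadruples. To salvage your route you would need to prove a large-set-type theorem about compatible coverings and packings that is well beyond \cref{th:_lambda}; as written, your argument establishes neither bound.
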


\begin{proof}[\bf{Proof}]
Set $n=a/2$. 
As $n+1 \equiv 1,3 \bmod 6$ and $\:n+1 \neq 7$,
there exist $n-1$ pairwise disjoint Steiner $(2,3,n+1)$-systems 
${\cal T}_0$, ${\cal T}_1$, \ldots, ${\cal T}_{n-2}$ 
on the element set $X=\{0,1,\ldots,n\}$. 
Each triple from $X$ appears as a block in exactly one of these systems. 
We are going to construct an $(A,B)$-system ${\cal Q}$ of the required size 
with $A=\{0,1,\ldots,a-1\}$, $B=\{a+1,\dots,2a-2\}$. 

For each block $\{z_1,z_2,z_3\}\in{\cal T}_j$ that does not contain $j$, 
we add to ${\cal Q}$ the following $8$ blocks
with $i_1,i_2,i_3,i_4\in\{0,1\}$ and $i_1+i_2+i_3+i_4 \equiv 0 \bmod 2$: 
\begin{align*}
\{ z_1 + i_1 n,\, z_2 + i_2 n,\, z_3 + i_3 n,\, 2n + j + i_4 (n-1) \}. 
\end{align*}
For each block $\{z_1,z_2,j\}\in{\cal T}_j$, 
we add to ${\cal Q}$ the following $4$ blocks: 
\begin{align*}
\{ z_1,\, z_1 + n,\, & z_2,\,     2n + j \},
\\
\{ z_1,\, z_1 + n,\, & z_2 + n,\, 2n + j \},
\\
\{ z_2,\, z_2 + n,\, & z_1,\,     2n + j + (n-1) \},
\\
\{ z_2,\, z_2 + n,\, & z_1 + n,\, 2n + j + (n-1) \}.
\end{align*}
So far, we have added 
$(n-1)\left(8(\frac{(n+1)n}{6}-\frac{n}{2}) + 4\cdot\frac{n}{2}\right)
= n(n-1)(8n-4)/6 = a(a-1)(a-2)/6$ 
blocks. 
It is easy to see that every triple from $A$ is contained 
in some of the blocks of ${\cal Q}$. 
Every triple $\{i',i'',j\}$ with $0 \leq i' < i'' \leq 2n-1$, $i'' - i' \neq n$, 
$\:j=2n,\ldots,4n-3$
is also contained in one of the blocks of ${\cal Q}$. 
However, for each pair $\{i,i+n\}$ with $i=0,1,\ldots,n-1$, 
there are $n-1$ values of $j \in \{2n,\ldots,4n-3\}$ such that  
the triple $\{i,i+n,j\}$ is not covered by the blocks of ${\cal Q}$. 
Notice that $n-1$ is odd.

If $a \equiv 0 \bmod 12$, then by \cite[Theorem~3]{Spencer:1968}, 
there exists a system ${\cal T}$ of
$(a^2-2a)/6$ triples in $A$ such that the only pairs uncovered by these triples 
are $\{i,i+n\}$ with $i=0,1,\ldots,n-1$. 
Add to ${\cal Q}$ the $(a^2-2a)/6$ quadruples $t\cup\{4n-2\}$ where $t\in{\cal T}$. 
Then by \cref{th:pairs}, 
\begin{align*}
 f(a,a-1) & \:\leq\: |{\cal Q}| + n \cdot \frac{(n-1)+1}{2}
 \\ & \: = \: \frac{a(a-1)(a-2)}{6} + \frac{a^2-2a}{6} + \frac{a^2}{8} 
 \: = \: \frac{4a^3-5a^2}{24} \, .
\end{align*}

If $a \equiv 4 \bmod 12$, then by \cref{th:pairs4}, 
there exists a system ${\cal T}$ of
$(a^2-2a+4)/6$ triples in $A$ such that the only pairs uncovered by these triples 
are $\{i,i+n\}$ with $i=0,1,\ldots,n-2$. 
Add to ${\cal Q}$ the $(a^2-2a+4)/6$ quadruples $t\cup\{4n-2\}$ where $t\in{\cal T}$. 
Then by \cref{th:pairs}, 
\begin{align*}
 f(a,a-1) & \:\leq\: |{\cal Q}| + n \cdot \frac{n}{2}
 \\ & \: = \: \frac{a(a-1)(a-2)}{6} + \frac{a^2-2a+4}{6} + \frac{a^2}{8}
 \: = \: \frac{4a^3-5a^2+16}{24} \, .
\end{align*}
\end{proof}

\begin{lemma}\label{th:6t+2_or_6t+4}
Let $|A|=a$, $|B|=a-3$, where $A \cap B = \emptyset$, 
$a \equiv 2,4 \bmod 6$, $a \ne 8$. 
Then there exists a system of quadruples on $A \cup B$ of size $(a-3)\,C(a)$ that 
covers all triples with two elements from $A$ and one from $B$ 
as well as all triples from $A$ with the exception of one. 
\end{lemma}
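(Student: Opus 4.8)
The plan is to exploit an arithmetic coincidence special to the residues $a\equiv2,4\bmod 6$. For such $a$ we have $a^2\equiv4\bmod 6$, so $C(a)=\lceil a^2/6\rceil=(a^2+2)/6$, and therefore
\begin{align*}
  (a-3)\,C(a)\;=\;\frac{(a-3)(a^2+2)}{6}\;=\;\frac{a(a-1)(a-2)-6}{6}\;=\;\binom{a}{3}-1 .
\end{align*}
This identity is the whole point of the lemma: a system of $(a-3)\,C(a)$ quadruples that each use a single element of $B$ and cover exactly one triple of $A$ can cover all triples of $A$ but one precisely when the underlying triple systems on $A$ are \emph{pairwise disjoint} optimal coverings. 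So I would first reduce the lemma to a statement about triples on $A$ alone.

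The reduction is to the claim that for $a\equiv2,4\bmod 6$, $a\neq8$, there exist $a-3$ pairwise disjoint optimal $(a,3,2)$-coverings ${\cal T}_1,\ldots,{\cal T}_{a-3}$ on $A$. Granting this, write $B=\{y_1,\ldots,y_{a-3}\}$ and adjoin $y_i$ to every block of ${\cal T}_i$; this produces $\sum_i|{\cal T}_i|=(a-3)\,C(a)$ quadruples, each meeting $B$ in exactly $y_i$. Since each ${\cal T}_i$ is a covering, every pair $\{x',x''\}\subseteq A$ lies in a block $T\in{\cal T}_i$, so $\{x',x'',y_i\}\subseteq T\cup\{y_i\}$; letting $i$ range over $[a-3]$ covers all triples with two elements in $A$ and one in $B$. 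Moreover, by disjointness the blocks of the ${\cal T}_i$ are $(a-3)\,C(a)=\binom{a}{3}-1$ \emph{distinct} triples of $A$, hence they are all triples of $A$ but a single one $t_0$. Thus every triple inside $A$ except $t_0$ is covered, which is exactly what the lemma asserts.

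The main obstacle is the construction of the $a-3$ disjoint optimal coverings, i.e.\ establishing $\lambda(a,3)=a-3$ for $a\equiv2,4\bmod 6$, $a\neq8$. The upper bound $\lambda(a,3)\le a-3$ is automatic: disjoint coverings use disjoint triples, so $\lambda(a,3)\,C(a)\le\binom{a}{3}$, while $(a-2)\,C(a)=\binom{a}{3}-1+C(a)>\binom{a}{3}$; hence $a-3$ is the largest possible number of disjoint optimal coverings and the lemma is best possible. For the matching construction I would not expect \cref{th:_lambda} (which only covers $a\equiv0\bmod 6$) to apply, so I would build the family directly. Here it helps that an optimal covering for even $a$ has a rigid excess structure: its repeated pairs can be taken to form a $K_{1,3}$ together with a perfect matching on the remaining $a-4$ points (this matches, for $a\equiv4\bmod 6$, the leave structure of Spencer's optimal packings used in \cref{th:pairs4}). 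I would therefore arrange a cyclic family over $\ZZ_{a-1}\cup\{\infty\}$, or a recursive group-divisible construction, so that the $K_{1,3}$'s and matchings of successive coverings occupy disjoint triples and no triple of $A$ is repeated.

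The delicate part is controlling these overlaps so that the $a-3$ coverings fit together leaving exactly one triple uncovered; this is where the argument will require genuine design-theoretic work rather than the clean counting of the reduction. The value $a=8$ must be excluded as a sporadic small-case obstruction (equivalently, a family of five pairwise disjoint optimal $(8,3,2)$-coverings fails to exist), of the kind that routinely appears in such constructions.
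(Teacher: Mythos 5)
Your arithmetic reduction is correct and clean: for $a \equiv 2,4 \bmod 6$ one indeed has $C(a)=(a^2+2)/6$ and $(a-3)\,C(a)=\binom{a}{3}-1$, and $a-3$ pairwise disjoint optimal $(a,3,2)$-coverings, each extended by its own element of $B$, would yield exactly the system the lemma asks for. But that reduction is where your proof ends. The existence of such a family --- i.e.\ $\lambda(a,3)\geq a-3$ for $a\equiv 2,4\bmod 6$, $a\neq 8$ --- is precisely the hard content, and you neither prove it nor cite it: the paper only quotes Etzion's $\lambda(a,3)=a-3$ for $a\equiv 0\bmod 6$ (\cref{th:_lambda}), which you correctly note does not apply, and your substitute is a sketch (``a cyclic family over $\ZZ_{a-1}\cup\{\infty\}$, or a recursive group-divisible construction'') with the explicit admission that the delicate part is deferred. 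The deferred step is not routine: a short excess count shows that the excess graphs of the $a-3$ coverings (each a $K_{1,3}$ plus a perfect matching on the remaining $a-4$ points) would have to partition $K_a$ minus the triangle on the one uncovered triple, while the triple systems themselves stay pairwise disjoint. Your parenthetical claim that the exclusion of $a=8$ is ``equivalently'' the nonexistence of five disjoint optimal $(8,3,2)$-coverings is also unjustified in both directions: the lemma permits quadruples with two elements of $B$, so your disjoint-covering route is sufficient but not necessary, and the paper excludes $a=8$ for a different reason.

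The paper sidesteps exactly the obstacle you ran into by deleting one vertex. With $A'=A\setminus\{x_a\}$, the order $a-1\equiv 1,3\bmod 6$ is odd, so optimal $(a-1,3,2)$-coverings are Steiner triple systems, and the cited result $\mu(a-1,3)=a-3$ (\cref{th:_mu}; this is where $a\neq 8$ enters, since $\mu(7,3)=6>5$) supplies $a-3$ optimal coverings of $A'$ whose \emph{union} contains every triple of $A'$ --- no disjointness is needed. Appending $y_i$ to the $i$-th system covers all relevant triples inside $A'\cup B$, and the triples through $x_a$ are then handled by an explicit difference-type family over $\ZZ_{a-3}$: the blocks $\{y_i,x_a,x_{a-1},x_i\}$, $\{y_i,x_a,x_{a-2},x_i\}$ and $\{y_i,x_a,x_{i+j},x_{i-j}\}$ (indices mod $a-3$), which miss only $\{x_{a-2},x_{a-1},x_a\}$ and bring the total to $(a-3)\,C(a)$. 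To salvage your approach you would have to either locate the large-set statement $\lambda(a,3)=a-3$ for $a\equiv 2,4\bmod 6$ in the literature or actually carry out the construction you sketched; as written, the proposal proves the lemma only modulo a statement at least as deep as the lemma itself.
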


\begin{proof}[\bf{Proof}]
Let $A=\{x_1,\ldots,x_a\}$, $B=\{y_1,\ldots,y_{a-3}\}$, $A' = A \backslash \{x_a\}$. 
By \cref{th:mu}, there is a system of quadruples ${\cal Q}$ of size 
$(a-3)\,C(a-1) = \frac{(a-1)(a-2)(a-3)}{6}$ that covers all triples from $A' \cup B$ 
with at at least two elements from $A'$. 
Add to ${\cal Q}$ blocks 
$\{y_i,x_a,x_{a-1},x_i\}$, $\{y_i,x_a,x_{a-2},x_i\}$ with $i=1,\ldots,a-3$ 
and blocks 
$\{y_i,x_a,x_{i+j},x_{i-j}\}$ with  $i=1,\ldots,a-3$, $j=1,\ldots,\frac{a-4}{2}$, 
where the indices $i \pm j$ are taken modulo $a-3$. 
The resulting system consists of 
$\frac{(a-1)(a-2)(a-3)}{6} + 2(a-3) + \frac{(a-3)(a-4)}{2} = (a-3)\,\frac{a^2-2}{6}$ 
quadruples that 
cover all triples with two elements from $A$ and one from $B$ 
as well as all triples from $A$ with the exception of $\{x_{a-2},x_{a-1},x_a\}$. 
\end{proof}

\begin{proposition}\label{th:6t+2or4}
For $a \equiv 2,4 \bmod 6$, $a \ne 8$, 
and $j \geq 1$, 
\begin{align*}    
  f(a,a-3+2j) & \:\leq\: 
  (a-3)\,C(a) \:+\: 2j\,C_*(a) 
  \\ & \: = \:
  (a-3)\,\frac{a^2+2}{6} \:+\: j\left\lceil\frac{2a^2-a}{6}\right\rceil 
  .
\end{align*}
\end{proposition}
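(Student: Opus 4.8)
The plan is to combine \cref{th:6t+2_or_6t+4} with an iterated application of \cref{th:+2}, so that the entire argument reduces to a single base case $j=1$. The inductive step is immediate: since $a-3+2j = (a-1) + 2(j-1)$, applying \cref{th:+2} a total of $j-1$ times gives $f(a,a-3+2j) \leq f(a,a-1) + 2(j-1)C_*(a)$, so everything hinges on establishing $f(a,a-1) \leq (a-3)\,C(a) + 2\,C_*(a)$.

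For this base case I would start from the quadruple system furnished by \cref{th:6t+2_or_6t+4}. With $|A|=a$ and a set $B_0$ of size $a-3$, it has size $(a-3)\,C(a)$ and covers every triple having two elements in $A$ and one in $B_0$, together with every triple inside $A$ except one, say $t^\ast=\{x_{a-2},x_{a-1},x_a\}$. The sole obstruction to this being an $(A,B_0)$-system is the uncovered triple $t^\ast$. To pass to $B=B_0\cup\{y',y''\}$ of size $a-1$, I would reuse the construction in the proof of \cref{th:+2}: fix a triple system ${\cal T}$ on $A$ of minimum weight $C_*(a)$, let ${\cal P}$ be the pairs of $A$ left uncovered by ${\cal T}$, and adjoin the $2|{\cal T}|+|{\cal P}|=2C_*(a)$ quadruples $T\cup\{y'\}$ and $T\cup\{y''\}$ for $T\in{\cal T}$, together with $P\cup\{y',y''\}$ for $P\in{\cal P}$. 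Exactly as in \cref{th:+2}, these cover all triples $\{x',x'',y'\}$ and $\{x',x'',y''\}$, and the triples already handled by the original system are untouched.

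The one delicate point — and what I expect to be the only real obstacle — is arranging that $t^\ast$ is covered at no extra cost. I would choose the minimum-weight system ${\cal T}$ so that $t^\ast$ is one of its blocks; then the adjoined quadruple $t^\ast\cup\{y'\}$ covers $t^\ast$, completing the $(A,B)$-system. Such a choice is always available: by the proof of \cref{th:C_*} a minimum-weight system is an optimal packing, and for $a\equiv 2,4 \bmod 6$ (so $a$ even, $a\geq 4$) any optimal packing contains at least one block; applying a permutation of $A$ that carries one of its blocks onto $t^\ast$ yields another minimum-weight system containing $t^\ast$. This gives the base case $f(a,a-1)\leq (a-3)\,C(a)+2\,C_*(a)$.

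Combining the base case with the inductive step yields $f(a,a-3+2j)\leq (a-3)\,C(a)+2j\,C_*(a)$. The closing equality in the statement is then merely the evaluation of the constants: from \cref{th:C_*} one has $C(a)=\frac{a^2+2}{6}$ and $2\,C_*(a)=\left\lceil\frac{2a^2-a}{6}\right\rceil$ for $a\equiv 2,4 \bmod 6$, so $(a-3)\,C(a)+2j\,C_*(a)=(a-3)\,\frac{a^2+2}{6}+j\left\lceil\frac{2a^2-a}{6}\right\rceil$, as claimed. The restriction $a\neq 8$ is inherited directly from \cref{th:6t+2_or_6t+4}, and no further case analysis in $a$ is needed.
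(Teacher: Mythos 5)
Your proposal is correct and follows essentially the same route as the paper: both arguments rest on \cref{th:6t+2_or_6t+4} combined with the two-new-points gadget of \cref{th:+2} (optimal packing ${\cal T}$ appended to each new $y$, uncovered pairs appended to pairs of new $y$'s), the only difference being that the paper attaches all $2j$ new elements in a single construction while you handle $j=1$ and then iterate \cref{th:+2}, which is the same computation. Your explicit arrangement that the exceptional triple $t^\ast$ be a block of the minimum-weight packing (available by permuting $A$, since $P(a)\geq 1$) is exactly the step needed to cover $t^\ast$; the paper leaves this implicit, so making it precise is a point in your favor, and the closing evaluation of constants matches the paper (modulo the small slip that $C(a)=\frac{a^2+2}{6}$ comes from the Fort--Hedlund formula quoted in the introduction, not from \cref{th:C_*}).
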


\begin{proof}[\bf{Proof}]
Let $b=a-3+2j$, $A=\{x_1,\ldots,x_a\}$, $B=\{y_1,\ldots,y_b\}$, 
$B'=\{y_1,\ldots,y_{a-3}\}$. 
By \cref{th:6t+2_or_6t+4}, there is a system ${\cal Q}$ 
of $(a-3)\,\frac{a^2+2}{6}$ quadruples on $A \cup B'$ that cover all triples from $A \cup B'$ 
with at at least two elements from $A$, except $\{x_1,x_2,x_3\}$. 
By \cref{th:packing}, there exists a system ${\cal T}$ of 
$m = \left\lfloor\frac{a^2-2a}{6}\right\rfloor$ 
triples on $A$ such that each pair of elements is contained in at most one triple. 
Let ${\cal P}$ be the system of pairs that are not covered by these triples. 
Then $|{\cal P}| = \frac{a(a-1)}{2} - 3m$. 
Add to ${\cal Q}$ 
quadruples $T \cup \{y_i\}$, where $T \in {\cal T}$ and $j=a-4,\ldots,b$, 
as well as quadruples $P \cup \{y_{a-4+2k},\,y_{a-3+2k}\}$, 
where $P \in {\cal P}$ and $k=1,\ldots,j$. 
Then the resulting system is an $(A,B)$-system, where the total number of blocks is 
$(a-3)\,\frac{a^2+2}{6} + 2jm + j(\frac{a(a-1)}{2} - 3m) = (a-3)\,\frac{a^2+2}{6} 
+ j\left\lceil\frac{2a^2-a}{6}\right\rceil$. 
\end{proof}

\begin{proposition}\label{th:S34}
\begin{align*}
 f(a,a) \leq \frac{a^3-a}{6} \, .
\end{align*}
\end{proposition}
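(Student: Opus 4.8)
The plan is to exploit the identity $\frac{a^3-a}{6} = \binom{a}{3} + \binom{a}{2}$. This decomposition strongly suggests assembling the $(A,B)$-system out of two families of quadruples: a first family of $\binom{a}{3}$ quadruples having three points in $A$, whose job is to cover every triple contained in $A$ while simultaneously sweeping up most of the mixed triples $\{x',x'',y\}$; and a second family of $\binom{a}{2}$ quadruples having two points in $A$, whose job is to patch the mixed triples the first family misses. I would index both sets cyclically, writing $A = \{x_0,\ldots,x_{a-1}\}$ and $B = \{y_0,\ldots,y_{a-1}\}$ with all subscripts read modulo $a$, so that $\ZZ_a$ becomes the natural label set.

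The first family consists of the quadruples $Q_{ijk} = \{x_i, x_j, x_k, y_{i+j+k}\}$ taken over all three-element subsets $\{i,j,k\} \subseteq \ZZ_a$, where the attached $B$-label is the sum of the three indices modulo $a$. Since $\{i,j,k\} \mapsto i+j+k$ is symmetric, each of the $\binom{a}{3}$ triples of $A$ receives exactly one quadruple, so every triple contained in $A$ is immediately covered.

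The key step — the only place that needs an idea rather than bookkeeping — is to pin down, for a fixed pair $\{i,j\}$, which mixed triples $\{x_i,x_j,y_l\}$ the first family covers. A quadruple $Q_{ijk}$ contains the pair $\{x_i,x_j\}$ exactly for $k \in \ZZ_a \setminus \{i,j\}$, and it then covers $\{x_i,x_j,y_{i+j+k}\}$. Because $k \mapsto i+j+k$ is a translation, it is a bijection of $\ZZ_a$, so as $k$ ranges over $\ZZ_a \setminus \{i,j\}$ the label $i+j+k$ ranges over all of $\ZZ_a$ \emph{except} the two values $2i+j$ and $i+2j$; and these two values are distinct precisely because $i \neq j$ forces $i - j \not\equiv 0 \pmod a$. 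Hence for every pair exactly two mixed triples remain uncovered, namely $\{x_i,x_j,y_{2i+j}\}$ and $\{x_i,x_j,y_{i+2j}\}$.

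This makes the second family self-selecting: for each pair $\{i,j\}$ I add the single quadruple $\{x_i, x_j, y_{2i+j}, y_{i+2j}\}$, which covers precisely the two leftover mixed triples for that pair. I would then verify the routine points: that this yields $\binom{a}{2}$ quadruples (one per pair), that each listed set is a genuine $4$-element quadruple (the $B$-labels are distinct and lie in $B$, disjoint from $A$), and that the union of the two families covers every triple with at least two points in $A$. The total is $\binom{a}{3} + \binom{a}{2} = \frac{a^3-a}{6}$, and the argument carries through for all $a$ with no congruence restriction. The main thing to watch is the count of ``missing'' labels — confirming that exactly two $y$-values escape the first family for each pair, which is exactly what the translation bijection guarantees; everything after that is direct verification.
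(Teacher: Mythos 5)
Your proof is correct and is essentially identical to the paper's: the paper uses exactly the same two families over $\ZZ_a$, namely $\{i,j,k,y_{i+j+k}\}$ for triples and $\{i,j,y_{2i+j},y_{i+2j}\}$ for pairs, with the same count $\binom{a}{3}+\binom{a}{2}=\frac{a^3-a}{6}$. The only difference is that you spell out the translation-bijection argument showing exactly the labels $2i+j$ and $i+2j$ escape the first family, which the paper leaves as ``easy to see.''
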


\begin{proof}[\bf{Proof}]
Let $A=\ZZ_a$, $B=\{y_j: j\in\ZZ_a\}$. 
Let ${\cal Q}$ be a system of quadruples with two types of blocks: 
$\{i,j,k,y_{i+j+k}\}$ where $\{i,j,k\} \subseteq A$, 
and $\{i,j,y_{2i+j},y_{i+2j}\}$ where $\{i,j\} \subseteq A$. 
It is easy to see that ${\cal Q}$ is an $(A,B)$-system and 
$|{\cal Q}| 
= \binom{a}{3} + \binom{a}{2} = \binom{a+1}{3}$. 
\end{proof}

\begin{proposition}\label{th:6t}
For $a \equiv 0 \bmod 6$, $a \geq 12$, and $j \geq 1$, 
\begin{align*}    
  f(a,a-3+2j) & \:\leq\: 
  (a-3)\,C(a) \:+\: 2j\,C_*(a) 
  \\ & \: = \: 
  (a-3)\,\frac{a^2}{6} \:+\: j\,\frac{2a^2-a}{6} .
\end{align*}
\end{proposition}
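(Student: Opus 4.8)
The plan is to mirror the proof of \cref{th:6t+2or4}: first build a ``base'' system on $A\cup B'$ with $|B'|=a-3$ that uses exactly $(a-3)\,C(a)$ blocks and already covers every triple having two elements in $A$ and one in $B'$, leaving only a small set of triples inside $A$ uncovered; then spend the remaining $2j$ points of $B$, together with an optimal packing of $A$, to cover everything that is left, exactly in the spirit of \cref{th:+2}. Since $a\equiv 0\bmod 6$ gives the integer values $C(a)=a^2/6$, $C_*(a)=(2a^2-a)/12$ and $P(a)=(a^2-2a)/6$, with an optimal packing of $A$ leaving $a/2$ uncovered pairs, the final bookkeeping will reproduce the claimed value.

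The base cannot come from \cref{th:6t+2_or_6t+4}, which is restricted to $a\equiv 2,4\bmod 6$; instead I would invoke \cref{th:_lambda}. Take $a-3$ pairwise disjoint optimal $(a,3,2)$-coverings ${\cal T}_1,\dots,{\cal T}_{a-3}$ of $A$ and adjoin a distinct point $y_i\in B'$ to every block of ${\cal T}_i$. The resulting $(a-3)\,C(a)$ quadruples cover every triple $\{x,x',y_i\}$ (because each ${\cal T}_i$ covers the pair $\{x,x'\}$) as well as every triple inside $A$ that occurs as a block of some covering. The triples of $A$ left uncovered are exactly the $\binom a3-(a-3)\frac{a^2}6=\tfrac a3$ triples that appear in none of the coverings.

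I would then note that these leftover triples automatically form a \emph{partial packing}: a fixed pair of $A$ lies in only $a-2$ triples, and since the $a-3$ coverings are pairwise disjoint the triples they use to cover that pair are all distinct, so at least $a-3$ of the $a-2$ triples through it are used and at most one is uncovered. Hence no pair lies in two leftover triples. Extending this partial packing to an optimal packing ${\cal T}$ of $A$ (of weight $C_*(a)$), I finish exactly as in \cref{th:6t+2or4}: for each $T\in{\cal T}$ and each of the $2j$ new points $y_i$ add $T\cup\{y_i\}$, and for each uncovered pair $P$ and each of the $j$ disjoint pairs $\{y,y'\}$ of new points add $P\cup\{y,y'\}$. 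Every leftover triple lies in ${\cal T}$, hence is now covered, and the number of added blocks is $2j\,P(a)+j\cdot\frac a2=2j\,C_*(a)$, giving the total $(a-3)\,C(a)+2j\,C_*(a)$.

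The step needing genuine care -- the main obstacle -- is the assertion that the leftover partial packing extends to an \emph{optimal} packing of $A$: only then does $w({\cal T})=C_*(a)$ and the count close exactly, whereas a non-maximum ${\cal T}$ would inflate the bound. I would settle this either by a standard embedding result for partial triple systems, or, more safely, by drawing the disjoint coverings ${\cal T}_1,\dots,{\cal T}_{a-3}$ from an explicit (e.g.\ cyclic) construction engineered so that the leftover triples form a single parallel class of $A$, which extends to an optimal packing in a transparent way.
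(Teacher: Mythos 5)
Your construction skeleton and the final bookkeeping coincide with the paper's own proof, and your elementary observation that pairwise disjointness of the $a-3$ coverings forces the $a/3$ leftover triples to form a partial packing is correct (and not stated in the paper). But the step you yourself flag is a genuine gap, and it sits exactly where the paper has to invoke two nontrivial design-theoretic inputs. First, \cref{th:_lambda} only asserts the existence of $a-3$ pairwise disjoint optimal coverings; it gives no control over the structure of the $a/3$ missed triples, which your partial-packing argument shows are pairwise edge-disjoint but which need not be vertex-disjoint, let alone a parallel class. The paper instead draws the coverings from the explicit construction of \cite[Section~C]{Etzion:1994}, which guarantees that the union misses precisely the parallel class $\{x_1,x_2,x_3\},\{x_4,x_5,x_6\},\ldots,\{x_{a-2},x_{a-1},x_a\}$; this is the ``engineered'' construction of your fallback, and it must be cited as a theorem, not assumed to exist. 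Second, even granted the parallel class, your claim that it ``extends to an optimal packing in a transparent way'' is not justified: an optimal packing of $A$ for $a \equiv 0 \bmod 6$ has $(a^2-2a)/6$ triples and its leave is a perfect matching, so it is equivalent to a Steiner triple system on $A\cup\{x_0\}$ (delete the blocks through $x_0$), and an extension containing the given parallel class is equivalent to an $STS(a+1)$ containing that parallel class as blocks. This is the Hanani triple system theorem of \cite{Vanstone:1993}, valid only for $a \geq 18$; the case $a=12$ needs the separate verification (done in the paper) that both Steiner triple systems of order $13$ contain a parallel class on $12$ points.

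Your other suggested repair, a ``standard embedding result for partial triple systems,'' does not close the gap either: Lindner-type embedding theorems place a partial system inside a complete system on a strictly \emph{larger} point set, whereas you need completion to a \emph{maximum} packing on the same $a$ points; no off-the-shelf theorem does this for an arbitrary partial packing, and completion problems within the same order are hard in general. Once the two citations above are in place, your argument becomes the paper's proof: the optimal packing ${\cal T}$ containing the parallel class is the paper's ${\cal S}_1$, your $a/2$ uncovered pairs are the pairs completed through $x_0$ (the blocks of ${\cal S}_0$), and your padding with the $2j$ new points of $B$ is exactly the paper's final step, in the pattern of \cref{th:+2}.
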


\begin{proof}[\bf{Proof}]
Let $a=6t$, $b=6t-3+2j$, $A=\{x_1,\ldots,x_a\}$, $B=\{y_1,\ldots,y_b\}$. 
It is proved in \cite[Section~C]{Etzion:1994} that there are 
systems ${\cal T}_1$,\ldots,${\cal T}_{6t-3}$ 
of triples on $A$ such that that each ${\cal T}_i$ is an optimal $(6t,3,2)$-covering, 
and ${\cal T}_1 \cup \ldots \cup {\cal T}_{6t-3}$ contains all triples in $A$ except 
$\{x_1,x_2,x_3\},\{x_4,x_5,x_6\},\ldots,\{x_{6t-2},x_{6t-1},x_{6t}\}$. 
Set $A' = A \cup \{x_0\}$. 
It is proved in \cite{Vanstone:1993} that for $t \geq 3$, 
there is a Steiner triple system ${\cal S}$ on $A'$ which contains 
$\{x_1,x_2,x_3\},\!\{x_4,x_5,x_6\},\!\ldots,\!\{x_{6t-2},x_{6t-1},x_{6t}\}$ as blocks. 
When $t=2$, it is easy to check that both Steiner triple systems of order $13$ 
have this property as well. 
Denote by ${\cal S}_0$ (by ${\cal S}_1$) the subsystem of ${\cal S}$ 
formed by the blocks that contain (do not contain) $x_0$. 
Consider a system of quadruples ${\cal Q}$ formed by the following blocks:
\begin{itemize}
\item 
Blocks of ${\cal T}_i$ appended by $y_i$ ($i=1,\ldots,6t-3$);
\item 
Blocks of ${\cal S}_0$ appended by $y_i$ ($i=6t-2,\ldots,6t-3+2j$);
\item 
Blocks of ${\cal S}_1$ where $x_0$ is replaced with $y_{6t-4+2k},\,y_{6t-3+2k}$ 
($k \in [j]$). 
\end{itemize}
It is easy to see that ${\cal Q}$ is an $(A,B)$-system, 
and the number of blocks in it is 
$(a-3)C(a) + 2j\,\frac{a^2-2a}{6} + j\frac{a}{2} 
= \frac{(a-3)a^2}{6} + j\,\frac{2a^2-a}{6}$ . 
\end{proof}

\begin{proposition}\label{th:large}
For even $a$, $\;f(a,2a-2) \leq (2a-2)\,C(a)$.
\end{proposition}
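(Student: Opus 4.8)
The plan is to match the lower bound of \cref{th:lower} by exhibiting a weight-optimal family of packings. Precisely, I look for $b=2a-2$ optimal packings $\mathcal{T}_1,\dots,\mathcal{T}_b$ of triples on $A$, each of minimum weight $C_*(a)$, with two further properties: (P1) every triple of $A$ lies in at least one $\mathcal{T}_i$, and (P2) for every pair $\{x',x''\}\subseteq A$ the number $m(x',x'')$ of indices $i$ at which $\{x',x''\}$ is left uncovered by $\mathcal{T}_i$ is even. Given such a family, set $B=\{y_1,\dots,y_b\}$ and $\mathcal{Q}=\bigcup_{i=1}^b\{T\cup\{y_i\}: T\in\mathcal{T}_i\}$. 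By (P1) every triple of $A$ is contained in a block of $\mathcal{Q}$, and for a pair $\{x',x''\}$ the set $U(x',x'')$ of \cref{th:pairs} is exactly $\{y_i:\{x',x''\}\text{ uncovered by }\mathcal{T}_i\}$, of even size $m(x',x'')$. Writing $p_i$ for the number of pairs of $A$ left uncovered by $\mathcal{T}_i$, \cref{th:pairs} then gives
\[
f(a,b)\;\le\;|\mathcal{Q}|+\sum_{\{x',x''\}}\tfrac{m(x',x'')}{2}
\;=\;\sum_{i=1}^b\Bigl(|\mathcal{T}_i|+\tfrac{p_i}{2}\Bigr)
\;=\;\sum_{i=1}^b w(\mathcal{T}_i)\;=\;b\,C_*(a)\;=\;(2a-2)\,C_*(a),
\]
and since $C_*(a)\le C(a)$ this yields the stated bound (indeed it establishes the sharper inequality $f(a,2a-2)\le(2a-2)C_*(a)$, which is the base case needed to iterate \cref{th:+2}).

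Producing the family is the heart of the matter, and it is transparent when $a\equiv 0,2\bmod 6$, so that $a+1\equiv 1,3\bmod 6$. Put $A'=A\cup\{\infty\}$ and take a large set of Steiner triple systems $\mathcal{S}_1,\dots,\mathcal{S}_{a-1}$ on $A'$ — a partition of all triples of $A'$ into $a-1$ systems $STS(a+1)$ — which exists by Teirlinck's theorem on large sets whenever $a+1\equiv 1,3\bmod 6$ and $a+1\ne 7$. Deleting $\infty$ turns each $\mathcal{S}_i$ into an optimal packing on $A$ whose leave is the perfect matching formed by the pairs $\{x,x'\}$ with $\{\infty,x,x'\}\in\mathcal{S}_i$; by \cref{th:C_*} each such packing has weight $C_*(a)$. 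Because the large set covers every triple of $A'$ exactly once, every triple of $A$ lies in exactly one of these $a-1$ packings and every pair of $A$ is uncovered in exactly one of them. Taking two copies of this family (with distinct $y$-labels) yields $b=2a-2$ optimal packings in which every triple is covered exactly twice and every pair is uncovered exactly twice, so (P1) and (P2) hold. The single exceptional order $a=6$ (where $a+1=7$ admits no large set) would be handled by a direct ad hoc construction.

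I expect the case $a\equiv 4\bmod 6$ to be the main obstacle. Here an optimal packing leaves not a matching but a $K_{1,3}$ together with $(a-4)/2$ independent edges (\cref{th:pairs4}), and a short count shows that $b=2a-2$ optimal packings contain only $2\binom{a}{3}-\tfrac{2(a-1)}{3}$ triples counted with multiplicity; hence one cannot cover every triple exactly twice, and the shortcut above is unavailable since $a+1\equiv 5\bmod 6$ carries no Steiner triple system. The plan is instead to build $b$ optimal packings in which exactly $\tfrac{2(a-1)}{3}$ triples are covered once and all others twice, while still forcing every pair to be uncovered an even number of times — in effect an index-two large set of maximum packings with prescribed leaves. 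The delicate point, and the step I expect to require the most work, is to choose the leaves (the $K_{1,3}$-plus-matching graphs) so that their union covers each pair an even number of times while the undercovered triples are distributed consistently; this parity bookkeeping, together with securing the existence of the underlying large set of maximum packings for $a\equiv 4\bmod 6$, is where the argument is hardest.
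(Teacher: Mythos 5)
Your construction for $a\equiv 0,2\bmod 6$ is exactly the paper's: a large set of $a-1$ disjoint Steiner triple systems on $A\cup\{\infty\}$ (Teirlinck), deletion of the extra point to get $a-1$ maximum packings of weight $C_*(a)$ each covering leave-matchings, each packing used with two labels from $B$, and \cref{th:pairs} to absorb the uncovered pairs. Your remark that the argument actually proves the sharper inequality $f(a,2a-2)\le (2a-2)\,C_*(a)$ is correct and matches what the paper's computation delivers; that sharper form is indeed what \cref{th:exact3} requires when \cref{th:+2} is iterated. For $a=6$, however, no ad hoc construction is needed: the paper dispatches it by $f(6,10)\le f(6,6)+4\,C_*(6)\le 10\,C_*(6)$ via \cref{th:2-6mod12,th:+2}; you flagged this exception but left it unresolved.

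The genuine gap is the case $a\equiv 4\bmod 6$, which you explicitly do not prove, and here your diagnosis both overshoots and misses the available tool. First, the parity condition (P2) costs nothing: because each system of the family is used twice, every $|U(x',x'')|$ is automatically even regardless of how the leaves interact, so the ``delicate parity bookkeeping'' of prescribed $K_{1,3}$-plus-matching leaves that you anticipate is a non-issue. All that is actually needed is a family of $a-1$ triple systems on $A$, each of weight exactly $C_*(a)$, whose union contains every triple of $A$ --- property (P1) alone. Second, your counting argument correctly shows that maximum packings of $A$ alone cannot jointly contain all $\binom{a}{3}$ triples, but the paper obtains the required family one level up: since $a+1\equiv 5\bmod 6$, \cref{th:_mu} gives $\mu(a+1,3)=a-1$, i.e.\ $a-1$ optimal $(a+1,3,2)$-coverings on $A\cup\{x\}$ whose union contains every triple. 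Citing Etzion and Ji, the paper takes such a family with a well-chosen element $x$ so that removing the blocks through $x$ leaves, in each system, $(a^2-2a-2)/6$ triples and $(a+2)/2$ uncovered pairs of $A$, hence weight $(a^2-2a-2)/6+(a+2)/4=(2a^2-a+2)/12=C_*(a)$; every triple of $A$ survives in some system because the covering property held upstairs. With that family, your own doubling computation finishes verbatim. So the missing ingredient is not a new parity argument or a new large-set existence theorem, but the same point-deletion trick you used in the $0,2\bmod 6$ case, applied to the known large sets of optimal \emph{coverings} of $A\cup\{x\}$ rather than to large sets of Steiner systems.
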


\begin{proof}[\bf{Proof}]
Since the case $a=6$ follows from \cref{th:2-6mod12,th:+2}, we may assume $a \neq 6$. 
Consider first the case $a \equiv 0,2 \bmod 6$. 
Then $a+1 \equiv 1,3 \bmod 6$, $\:a+1 \neq 7$. 
There exist (see \cite{Teirlinck:1991}) $(a-1)$ disjoint Steiner $S(2,3,a+1)$-systems 
${\cal T}_1,\ldots,{\cal T}_{a-1}$ on an $(a+1)$-element set $A$. 
Select $x \in A$ and set $A^* = A\backslash\{x\}$, 
$\,{\cal T}_i^* := \{t\in{\cal T}_i:\: x \notin t\}$. 
Notice that every triple from $A^*$ appears as a block in one of the systems 
${\cal T}_1^*,\ldots,{\cal T}_{a-1}^*$. 
For each $i=1,\ldots,a-1$, 
the number of pairs from $A^*$ not covered by blocks of ${\cal T}_i^*$ is 
$\binom{a}{2} - |{\cal T}_i^*|
= \binom{a}{2} - 3(\frac{(a+1)a}{6} - \frac{a}{2}) 
= \frac{a}{2}$. 
Set $B=\{y_1,\ldots,y_{2a-2}\}$ where $\,B \cap A = \emptyset$. 
Consider a system of quadruples ${\cal Q}$ that contains two types of blocks: 
$t\cup\{y_j\}$ where $t\in{\cal T}_i^*$, $\:j\in\{2i-1,2i\}$, 
and $\{x',x'',y_{2i-1},y_{2i}\}$ 
where $\{x',x''\}$ is a pair uncovered by blocks of ${\cal T}_i^*$. 
Every pair $\{x',x''\} \subseteq A^*$ remains uncovered 
in an even number of systems among ${\cal T}_1^*,\ldots,{\cal T}_{a-1}^*$. 
Thus, by \cref{th:pairs}, 
\begin{align*}
  f(a,2a-2) & \:\leq\: (2a-2) \left(\frac{(a+1)a}{6} - \frac{a}{2}\right)
    \: + \: \frac{1}{2} (2a-2)\,\frac{a}{2}
  \\ & = \: (2a-2)\, C(a) \, .
\end{align*}

Now consider the case $a \equiv 4 \bmod 6$. 
By \cite{Etzion:1994,Ji:2006}, there exist $a-1$ disjoint systems of triples 
${\cal T}_1,\ldots,{\cal T}_{a-1}$ 
on an $(a+1)$-element set $A$ such that each ${\cal T}_i$ is an optimal packing, 
and each triple from $A$ appears as a block in at least one of the systems. 
Moreover, there is an element $x \in A$ 
(in fact, there are two elements with this property) 
such that each subsystem
$\,{\cal T}_i^* := \{t\in{\cal T}_i:\: x \notin t\}$ 
is an optimal packing, 
has $\frac{a^2 - 2a - 2}{6}$ blocks 
and leaves uncovered $\frac{a+2}{2}$ pairs from $A^*: = A\backslash\{x\}$. 
The rest of the proof is the same as in the preceding case. 
\end{proof}

When $a \equiv 0,4,8,10 \bmod 12$, 
the upper bounds of \cref{th:6t+2or4,th:6t} can be improved. 
In particular, we are aware of constructions for small $a$ 
that yield the following bounds: 
$f(8,9) \leq 92$, 
$f(10,11) \leq 179$, 
$f(10,13) = 208$, 
$f(12,13) \leq 303$, 
$f(12,15) \leq 348$, 
$f(16,17) \leq 710$, 
$f(16,19) \leq 791$.

\section{Exact values of $f(a,b)$}

\begin{theorem}\label{th:exact}
If $a$ is odd and $b \geq a-2$ $\;(b \geq 6$ if $a=7)$, then 
\begin{align*}
  f(a,b) = b\left\lceil\frac{a^2-a}{6}\right\rceil .
\end{align*} 
\end{theorem}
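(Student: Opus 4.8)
The plan is to sandwich $f(a,b)$ between the lower bound of \cref{th:lower} and the upper bound of \cref{th:mu}, and then to observe that for odd $a$ these two bounds coincide, so that equality is forced.

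The first and essentially only conceptual point is that for odd $a$ the optimal covering number and the minimum weight agree:
\begin{align*}
  C(a) \;=\; C_*(a) \;=\; \left\lceil\frac{a^2-a}{6}\right\rceil .
\end{align*}
Indeed, since $a-1$ is even, the formula for $C(a)=C(a,3,2)$ recalled in the introduction gives $C(a)=\bigl\lceil\frac{a}{3}\cdot\frac{a-1}{2}\bigr\rceil=\bigl\lceil\frac{a^2-a}{6}\bigr\rceil$, while the odd case of \cref{th:C_*} gives precisely the same value for $C_*(a)$. With this identity in hand the lower bound is immediate: \cref{th:lower} yields $f(a,b)\geq b\,C_*(a)=b\bigl\lceil\frac{a^2-a}{6}\bigr\rceil$.

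For the matching upper bound I would invoke \cref{th:mu}, which gives $f(a,b)\leq b\,C(a)$ as soon as $b\geq\mu(a,3)$. Here the exact value of $\mu(a,3)$ recorded in \cref{th:_mu} is what dictates the hypothesis on $b$: for odd $a\neq 7$ one has $\mu(a,3)=a-2$, so the assumption $b\geq a-2$ suffices, whereas for $a=7$ one has $\mu(7,3)=6$, which is exactly why the statement requires $b\geq 6$ rather than the nominal $b\geq a-2=5$ in that single exceptional case. In every case covered by the hypotheses we thus obtain $f(a,b)\leq b\,C(a)=b\bigl\lceil\frac{a^2-a}{6}\bigr\rceil$, and combining this with the lower bound gives the asserted equality.

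There is essentially no analytic obstacle here; the whole argument rests on the coincidence $C(a)=C_*(a)$ for odd $a$, which is what aligns the lower bound of \cref{th:lower} with the upper bound of \cref{th:mu}. The one place where care is needed is the bookkeeping of the threshold $\mu(a,3)$: the exceptional value $\mu(7,3)=6$ is the sole reason the theorem cannot simply assert $b\geq a-2$ for all odd $a$, and accounting for this exception is the only thing that keeps the proof from collapsing to a one-line citation.
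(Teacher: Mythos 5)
Your proposal is correct and matches the paper's proof, which is exactly the same one-line argument: combine the lower bound $f(a,b)\geq b\,C_*(a)$ of \cref{th:lower} with the upper bound $f(a,b)\leq b\,C(a)$ of \cref{th:mu}, using the identity $C_*(a)=C(a)=\left\lceil\frac{a^2-a}{6}\right\rceil$ for odd $a$ (\cref{th:C_*}) and the values of $\mu(a,3)$ from \cref{th:_mu}. Your explicit handling of the exceptional case $\mu(7,3)=6$, which forces the hypothesis $b\geq 6$ when $a=7$, is precisely the bookkeeping the paper leaves implicit.
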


\begin{proof}[\bf{Proof}]
As $C_*(a) = C(a)$ for odd $a$, the statement of the theorem follows from 
\cref{th:mu,th:_mu,th:C_*,th:lower}. 
\end{proof}

\begin{theorem}\label{th:exact2}
If $a \equiv 2,6 \bmod 12$ and $b \geq a$, then 
\begin{align*}
  f(a,b) & \: = \: b\,\frac{2a^2-a}{12} \;\;\;\;{\rm for\;even}\; b ,
  \\
  f(a,b) & \: = \: (b-1)\,\frac{2a^2-a}{12} 
    + \left\lceil\frac{a^2-a}{6}\right\rceil \;\;\;\;{\rm for\;odd}\; b .
\end{align*} 
\end{theorem}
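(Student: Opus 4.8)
The plan is to sandwich $f(a,b)$ between the lower bounds already proved and the upper bounds obtained by bootstrapping the construction of \cref{th:2-6mod12} with \cref{th:+1,th:+2}. First I record two simplifications valid for $a \equiv 2,6 \bmod 12$: the quantity $\frac{2a^2-a}{6}$ is then an integer, so the ceiling in \cref{th:C_*} is vacuous and $C_*(a) = \frac{2a^2-a}{12}$ (always a half-integer for these residues, as $\frac{2a^2-a}{6}$ works out to be odd); and, $a$ being even, the covering number is $C(a) = \lceil a^2/6 \rceil$.

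For even $b \geq a$, write $b = a + 2j$ with $j \geq 0$. The construction of \cref{th:2-6mod12} gives $f(a,a) \leq a\,C_*(a)$, and applying \cref{th:+2} $j$ times yields
\[
f(a,b) \;\leq\; f(a,a) + 2j\,C_*(a) \;\leq\; (a+2j)\,C_*(a) \;=\; b\,C_*(a).
\]
Since \cref{th:lower} gives the reverse inequality $f(a,b) \geq b\,C_*(a)$, the two match and $f(a,b) = b\,C_*(a) = b\,\frac{2a^2-a}{12}$, which is the even-$b$ formula.

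For odd $b \geq a$ (as $a$ is even, the smallest such $b$ is $a+1$), the index $b-1$ is even and at least $a$, so the even case already established gives $f(a,b-1) = (b-1)\,C_*(a)$. One application of \cref{th:+1} then gives the upper bound
\[
f(a,b) \;\leq\; f(a,b-1) + C(a) \;=\; (b-1)\,C_*(a) + C(a).
\]
For the matching lower bound I invoke \cref{th:lower+} (applicable because $a \equiv 0,2 \bmod 6$ and $b$ is odd), which gives $f(a,b) \geq (b-1)\,C_*(a) + C(a)$. The two bounds coincide, so $f(a,b) = (b-1)\,C_*(a) + C(a)$, with the additive constant being the covering number $C(a) = \lceil a^2/6 \rceil$.

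The bootstrapping is routine; the point that needs care is pinning the additive constant in the odd-$b$ case. Because $C_*(a)$ is a half-integer here, the naive bound $f(a,b) \geq b\,C_*(a)$ of \cref{th:lower}, even after rounding the integer $f(a,b)$ up, only yields $(b-1)C_*(a) + \lceil C_*(a) \rceil$, and $\lceil C_*(a) \rceil = \lceil \frac{2a^2-a}{12} \rceil$ can be strictly smaller than $C(a) = \lceil a^2/6 \rceil$ (for instance at $a = 14, 18$ the gap is $1$). Thus the parity refinement of \cref{th:lower+} is essential: it supplies exactly the extra amount needed to reach $C(a)$, which is precisely the quantity that \cref{th:+1} adds on the construction side. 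I would therefore verify carefully that the additive term of \cref{th:lower+} equals $C(a) = \lceil a^2/6 \rceil$ for these residues, and that $b \geq a$ forces $b-1 \geq a$ so that the even-case value is available to feed into \cref{th:+1}.
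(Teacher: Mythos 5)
Your proof is correct and is precisely the paper's argument: the paper's proof of this theorem consists of citing \cref{th:lower,th:lower+,th:C_*,th:+1,th:+2,th:2-6mod12}, which you assemble in exactly the intended way (even $b$: \cref{th:2-6mod12} plus iterated \cref{th:+2} against the lower bound of \cref{th:lower}; odd $b$: one application of \cref{th:+1} against \cref{th:lower+}). One point you raise deserves emphasis, because it exposes a typo in the statement itself. Your additive constant in the odd-$b$ case is $C(a)=\left\lceil a^2/6\right\rceil$; this is indeed what the argument yields, since $C_*(a)+\frac{a}{12}=\frac{2a^2-a}{12}+\frac{a}{12}=\frac{a^2}{6}$, so the final step of \cref{th:lower+} produces exactly $C(a)$, and \cref{th:+1} matches it from above. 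The statement as printed instead has $\left\lceil\frac{a^2-a}{6}\right\rceil$, which is strictly smaller for $a>2$ in these residue classes (e.g.\ for $a=14$ it gives $31$, while $C(14)=33$), and would contradict the lower bound of \cref{th:lower+}. Your value is the correct one: it agrees with the analogous constant $\left\lceil a^2/6\right\rceil$ in \cref{th:exact3}, and with the polynomial values $f(12s+2,12s+3)=288s^3+156s^2+28s+2$ and $f(12s+6,12s+7)=288s^3+444s^2+228s+39$ that the paper extracts from this theorem in the proof of \cref{th:lotto}. So there is no gap in your proposal; your careful tracking of the half-integrality of $C_*(a)$ (and the observation that $\lceil C_*(a)\rceil$ can fall short of $C(a)$, as at $a=14,18$) is exactly the point of \cref{th:lower+}, and it correctly identifies that the displayed formula should read $\left\lceil\frac{a^2}{6}\right\rceil$ rather than $\left\lceil\frac{a^2-a}{6}\right\rceil$.
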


\begin{proof}[\bf{Proof}]
The statement follows from 
Theorems~\ref{th:lower}, \ref{th:lower+} 
and
Propositions~\ref{th:C_*}, \ref{th:+1}, \ref{th:+2}, \ref{th:2-6mod12}. 
\end{proof}

\begin{theorem}\label{th:exact3}
If $a$ is even  
and $b \geq 2a-2$, then
\begin{align*}
 f(a,b) & \: = \: \frac{b}{2} \left\lceil\frac{2a^2-a}{6}\right\rceil
 \;\;\;\;{\rm if}\; b \equiv 0 \bmod 2,
 \\
 f(a,b) & \: = \: \frac{b-1}{2} \left\lceil\frac{2a^2-a}{6}\right\rceil
   \: + \: \left\lceil\frac{a^2}{6}\right\rceil  
 \;\;\;\;{\rm if}\; b \equiv 1 \bmod 2.
\end{align*} .
\end{theorem}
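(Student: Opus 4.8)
The plan is to prove the two displayed equalities by sandwiching $f(a,b)$ between matching lower and upper bounds, organised according to the parity of $b$. Throughout I would use that for even $a$ one has $C(a)=\lceil a^2/6\rceil$ and $2C_*(a)=\lceil(2a^2-a)/6\rceil$ (from \cref{th:C_*} and the value of $C(a)$), so that the even-$b$ target equals $b\,C_*(a)$ and the odd-$b$ target equals $(b-1)C_*(a)+C(a)$.

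For the upper bound I would take \cref{th:large} as the base case. The construction in its proof produces an $(A,B)$-system with $|B|=2a-2$ whose size evaluates to $(a-1)\frac{2a^2-a}{6}=(2a-2)\,C_*(a)$ (this is the quantity the construction actually yields), giving $f(a,2a-2)\le(2a-2)C_*(a)$. Climbing by \cref{th:+2} in steps of two then gives $f(a,b)\le b\,C_*(a)$ for every even $b\ge2a-2$. For odd $b$ I would apply \cref{th:+1} once, $f(a,2a-1)\le(2a-2)C_*(a)+C(a)$, and then climb again by \cref{th:+2} to get $f(a,b)\le(b-1)C_*(a)+C(a)$ for every odd $b\ge2a-1$. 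Both estimates match the displayed closed forms.

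The lower bound is immediate for even $b$: \cref{th:lower} gives $f(a,b)\ge b\,C_*(a)$. For odd $b$ with $a\equiv0,2\bmod6$, \cref{th:lower+} gives exactly $f(a,b)\ge(b-1)C_*(a)+C(a)$, completing those residue classes.

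The single genuine obstacle is the odd-$b$ lower bound when $a\equiv4\bmod6$, which \cref{th:lower+} does not cover. The difficulty is structural: \cref{th:d/12}, the engine behind \cref{th:lower+}, relies on $(2a^2-a)/6$ being an integer, and this fails for $a\equiv4\bmod6$. Re-running that argument with the correct rounding yields only $w({\cal T}_i)\ge C_*(a)+\frac{1}{2}\lceil\frac{d_i-2}{6}\rceil$, and this is \emph{tight}, since an optimal packing with one extra pair-doubling still has weight $C_*(a)$; thus $d_i=2$ buys no improvement. But the parity count behind \cref{eq:lower3} only forces $\sum_i d_i\ge a$, a budget that can be spent entirely on colours carrying a single doubled pair, so the transcription returns merely $f\ge b\,C_*(a)$ --- short of the target by $C(a)-C_*(a)=\frac{a+2}{12}$. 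The hard part will be closing this gap: because no per-colour improvement exists, the argument must become global, showing that the forced vertex-parities cannot be achieved while \emph{every} triple of $A$ is also covered, unless some pair-count $m(j,k)$ is odd (paying $\frac12$) or some colour has $d_i\ge3$ (paying $\frac12$). I would pursue this either through a sharper, covering-aware weight lemma or by trading the per-vertex parity count for a per-pair invariant; making one of these quantitative is the crux, with everything else being the bookkeeping above.
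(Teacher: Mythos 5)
Your route is, in substance, the paper's own proof: lower bounds from \cref{th:lower} (even $b$) and \cref{th:lower+} (odd $b$), and upper bounds obtained from \cref{th:large} as the base case, climbed by \cref{th:+2}, with one application of \cref{th:+1} for odd $b$. You are also right on an important point of detail: the bound in \cref{th:large} must be read as $(2a-2)\,C_*(a)$, since the construction in its proof yields $(a-1)\frac{2a^2-a}{6}$ blocks when $a\equiv 0,2\bmod 6$ and $(a-1)\frac{2a^2-a+2}{6}$ when $a\equiv 4\bmod 6$, i.e.\ $(2a-2)\,C_*(a)$ in both cases; with the literal value $(2a-2)\,C(a)$ the even-$b$ upper bound would not match the claimed exact value at all. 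So for even $b$ (all even $a$), and for odd $b$ with $a\equiv 0,2\bmod 6$, your argument is complete and identical to the paper's.

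The gap you flag is genuine, and your proposal does not close it: for $a\equiv 4\bmod 6$ and odd $b$ you only sketch programmes, not a proof. Your diagnosis of why the paper's machinery fails there is correct: the integrality step in \cref{th:d/12} uses that $\frac{2a^2-a}{6}$ is an integer, and for $a\equiv 4\bmod 6$ the rounding degrades to $w({\cal T})\geq C_*(a)+\frac12\lceil\frac{d-2}{6}\rceil$, which is tight at $d=2$ because a minimum-weight system may carry one doubled pair; hence the parity budget $\sum_i\sum_j d_i(j)\geq a$ from \cref{eq:lower3} can be absorbed at zero cost by giving each of the $b\geq 2a-1$ systems ${\cal T}_i$ a single doubled pair, leaving only $f(a,b)\geq b\,C_*(a)$. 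You should know, however, that the paper is no better off: its proof of this theorem consists of citing \cref{th:lower+}, whose hypothesis is $a\equiv 0,2\bmod 6$, so the case $a\equiv 4\bmod 6$, $b$ odd, is not justified by the listed results either. Only $a=4$ survives on integrality alone: there $C_*(4)=\frac52$, so \cref{th:lower} already gives $\lceil\frac{5b}{2}\rceil=(b-1)C_*(4)+C(4)$ for odd $b$; for $a\equiv 10\bmod 12$ the shortfall against the claimed value is $\frac{a+2}{12}\geq 1$, and for $a\equiv 4\bmod 12$, $a\geq 16$, it is still $\frac{a-4}{12}\geq 1$ after rounding. In short, your write-up proves exactly what the cited lemmas can prove and correctly isolates the one residue class where a new idea is needed --- a case the paper's own one-line proof also fails to cover.
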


\begin{proof}[\bf{Proof}]
The statement follows from 
Theorems~\ref{th:lower}, \ref{th:lower+} 
and
Propositions~\ref{th:C_*}, \ref{th:+1}, \ref{th:+2}, \ref{th:large}. 
\end{proof}

\section{$(n,4,3,4)$-lottery systems}\label{sec:lotto}

Let $A,B,C$ be disjoint finite sets. 
Let ${\cal Q'}$ be an $(A,B)$-system, 
${\cal Q''}$ be an $(B,C)$-system, 
and ${\cal Q'''}$ be an $(C,A)$-system. 
It is easy to see that ${\cal Q'} \cup {\cal Q''} \cup {\cal Q'''}$ 
is an $(|A \cup B \cup C|,4,3,4)$-lottery system. 
This simple observation yields an upper bound on $L(n)$: 

\begin{proposition}\label{th:ABC}
$L(a+b+c) \leq f(a,b)+f(b,c)+f(c,a)$. 
\end{proposition}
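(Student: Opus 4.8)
The plan is to exhibit an explicit $(n,4,3,4)$-lottery system on the disjoint union $V = A \cup B \cup C$ by gluing together the three given systems ${\cal Q'}$, ${\cal Q''}$, ${\cal Q'''}$, and then verify the covering property directly. Concretely, I would take ${\cal R} := {\cal Q'} \cup {\cal Q''} \cup {\cal Q'''}$ as the candidate lottery system. Its size is at most $|{\cal Q'}| + |{\cal Q''}| + |{\cal Q'''}|$, and by choosing each of the three systems to be optimal for its respective parameters, this is $f(a,b) + f(b,c) + f(c,a)$. So the size bound is immediate once the covering property is established; the whole content of the proof is the verification that ${\cal R}$ is a valid $(n,4,3,4)$-lottery system.

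The key step is the following combinatorial observation. Fix an arbitrary $4$-subset $K \subseteq V$, where $n = a+b+c$. I would partition $K$ according to how its four elements distribute among the three parts $A$, $B$, $C$, and produce a triple $T \subseteq K$ together with a choice of one of the three systems that covers it. The driving fact is that any $4$-element multiset of labels from $\{A,B,C\}$ must place at least two elements into one of the three parts: by pigeonhole, some part receives at least $\lceil 4/3 \rceil = 2$ of the four elements. The plan is to do a short case analysis on the distribution of the four elements of $K$ among $A,B,C$.

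The case analysis runs as follows. Suppose, after relabeling the parts cyclically, that $A$ contains at least two elements of $K$; say $x', x'' \in K \cap A$. Pick any third element $z \in K \setminus \{x', x''\}$. Then $T := \{x', x'', z\}$ is a triple contained in $K$. If $z \in A$ as well, then $T \subseteq A$, so in particular $|T \cap A| = 3 \geq 2$ and $T \cup \{w\}$ for some fourth element $w$ is covered; more carefully, since $|T \cap A| \geq 2$ regardless of whether $z \in A$ or $z \in B$, the triple $T$ satisfies the defining condition for the $(A,B)$-system ${\cal Q'}$ \emph{provided} $z \in A \cup B$. If instead the only third element available lies in $C$, I would instead choose the pair $\{x',x''\} \subseteq A$ together with an element of $C$ and invoke the $(C,A)$-system ${\cal Q'''}$, whose defining condition requires at least two elements from $C$ --- here I must be careful, since having two elements in $A$ matches ${\cal Q'}$ or ${\cal Q'''}$ only when the third triple element lands in the complementary part of the corresponding system. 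The clean way to organize this is to observe that whichever part holds $\geq 2$ elements of $K$, at least one of the two systems naming that part as its ``majority'' side ($A$ is the majority side of both ${\cal Q'}=(A,B)$ and ${\cal Q'''}=(C,A)$) has its minority side also meeting $K$, because the remaining two elements of $K$ lie in the other two parts, one of which is exactly that minority side. Selecting the triple formed by the two majority elements plus one minority element then yields a triple covered by the appropriate system, giving a block $R \in {\cal R}$ with $|R \cap K| \geq 3 \geq t$.

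The main obstacle I anticipate is organizing the case analysis so that the cyclic symmetry among $(A,B)$, $(B,C)$, $(C,A)$ is exploited cleanly rather than checking all $\binom{4+2}{2}=15$ distributions by hand; the subtle point is the configuration where one part contains two elements of $K$ and the other two elements fall into the single part that is \emph{not} the minority side of either system naming the heavy part, which must be ruled out by noting that with three parts and the two remaining elements, at least one always lands in a usable minority side. Once this is phrased correctly the verification is routine, and the size estimate $L(a+b+c) \leq f(a,b)+f(b,c)+f(c,a)$ follows immediately by taking optimal systems. I expect the formal write-up to be only a few lines, essentially the pigeonhole remark plus one symmetric case.
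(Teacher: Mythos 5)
Your overall strategy --- take the union ${\cal Q'}\cup{\cal Q''}\cup{\cal Q'''}$ of optimal systems and check by pigeonhole plus a case analysis that every $4$-set $K$ contains a covered triple --- is exactly the paper's (the paper states the verification is easy and omits it). But your ``clean'' organizing claim is false: you assert that $A$ is the majority side of both ${\cal Q'}=(A,B)$ and ${\cal Q'''}=(C,A)$. By the paper's definition, the $(C,A)$-system covers triples $T\subseteq C\cup A$ with $|T\cap C|\geq 2$; there $A$ is the \emph{minority} side. In the cyclic arrangement $(A,B),(B,C),(C,A)$ each part is the majority side of exactly one system, not two. Consequently, in the configuration $(|K\cap A|,|K\cap B|,|K\cap C|)=(2,0,2)$ your proposed move --- take the pair in $A$ together with an element of $C$ and invoke ${\cal Q'''}$ --- fails: that triple has $|T\cap C|=1$, so ${\cal Q'''}$ need not cover it, and it is not a subset of $A\cup B$, so ${\cal Q'}$ does not apply either. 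Your closing remark that the bad configuration ``must be ruled out'' because a remaining element ``always lands in a usable minority side'' inherits the same error: the heavy part $A$ has only one usable minority side, namely $B$, and $B$ is empty in this configuration.

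The repair is to switch which part you treat as heavy, not to invent a second system naming it: in the distribution $(2,0,2)$ the part $C$ also holds two elements of $K$, and its cyclic successor $A$ meets $K$, so the triple formed by the two elements of $C$ plus one element of $A$ is covered by ${\cal Q'''}$. The correct invariant is cyclic: if a part $X$ with $|K\cap X|\geq 2$ has its successor (under $A\to B\to C\to A$) disjoint from $K$, then the remaining two elements of $K$ all lie in the predecessor of $X$, which is therefore itself heavy and has successor $X$ meeting $K$. With this substitution your case analysis closes, the union is a valid $(a+b+c,4,3,4)$-lottery system, and the size bound follows as you say.
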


Thus, the upper bounds on $f(a,b)$ from \cref{sec:upper} 
can be used to obtain bounds on $L(n)$. 
The trick is to find the best partition $n=a+b+c$ for each $n$ 
(the order of parts also matters). 
For example, $18t+15 = (6t+7)+(6t+5)+(6t+3)$ gives
better results than $18t+15 = (6t+5)+(6t+5)+(6t+5)$ 
or $18t+15 = (6t+7)+(6t+3)+(6t+5)$. 

\begin{theorem}\label{th:lotto}
\begin{align*}
L(n) &\:\leq\: (4n^3 -12n^2 + 48n + 176)/216 \;\;\;\;{\rm if}\; n \equiv 1\bmod 18,
 \;n>19,\\
L(n) &\:\leq\: (4n^3 -12n^2          )/216 \;\;\;\;{\rm if}\; n \equiv 3,9\bmod 18,\\
L(n) &\:\leq\: (4n^3 -12n^2 +        16)/216 \;\;\;\;{\rm if}\; n \equiv 5 \bmod 18,\\
L(n) &\:\leq\: (4n^3 -12n^2 +        80)/216 \;\;\;\;{\rm if}\; n \equiv 7 \bmod 18,\\
L(n) &\:\leq\: (4n^3 -12n^2 + 48n -  80)/216\;\;\;\;{\rm if}\; n \equiv 11\bmod 18,\\
L(n) &\:\leq\: (4n^3 -12n^2 + 96n -  16)/216\;\;\;\;{\rm if}\; n \equiv 13\bmod 18,\\
L(n) &\:\leq\: (4n^3 -12n^2 + 48n + 144)/216\;\;\;\;{\rm if}\; n \equiv 15\bmod 18,\\
L(n) &\:\leq\: (4n^3 -12n^2 + 96n + 112)/216\;\;\;\;{\rm if}\; n \equiv 17\bmod 18,
 \;n>17,
\end{align*}
\begin{align*}
L(n) &\:\leq\: (4n^3- 9n^2 + 48n      )/216 \;\;\;\;{\rm if}\; n \equiv 0\bmod 36,
       \: n>36,\\
L(n) &\:\leq\: (4n^3- 9n^2 - 12n +  28)/216 \;\;\;\;{\rm if}\; n \equiv 2\bmod 36,
       \: n>38,\\
L(n) &\:\leq\: (4n^3- 9n^2 - 12n + 152)/216 \;\;\;\;{\rm if}\; n \equiv 4\bmod 36,
       \: n>40,\\
L(n) &\:\leq\: (4n^3-10n^2 + 12n +  72)/216 \;\;\;\;{\rm if}\; n \equiv 6\bmod 36,\\
L(n) &\:\leq\: (4n^3-10n^2 +  4n +  72)/216 \;\;\;\;{\rm if}\; n \equiv 8\bmod 36,\\
L(n) &\:\leq\: (4n^3- 9n^2       + 140)/216\;\;\;\;{\rm if}\; n \equiv 10\bmod 36,\\
L(n) &\:\leq\: (4n^3- 9n^2 + 36n + 216)/216\;\;\;\;{\rm if}\; n \equiv 12\bmod 36,\\
L(n) &\:\leq\: (4n^3- 9n^2 + 84n + 196)/216\;\;\;\;{\rm if}\; n \equiv 14\bmod 36,\\
L(n) &\:\leq\: (4n^3- 9n^2 + 36n + 248)/216\;\;\;\;{\rm if}\; n \equiv 16\bmod 36,\\
L(n) &\:\leq\: (4n^3-10n^2 + 60n      )/216\;\;\;\;{\rm if}\; n \equiv 18\bmod 36,\\
L(n) &\:\leq\: (4n^3-10n^2 +  4n      )/216\;\;\;\;{\rm if}\; n \equiv 20\bmod 36,\\
L(n) &\:\leq\: (4n^3-10n^2 +  8n +  88)/216\;\;\;\;{\rm if}\; n \equiv 22\bmod 36,\\
L(n) &\:\leq\: (4n^3- 8n^2       - 144)/216\;\;\;\;{\rm if}\; n \equiv 24\bmod 36,\\
L(n) &\:\leq\: (4n^3- 8n^2 - 16n + 104)/216\;\;\;\;{\rm if}\; n \equiv 26\bmod 36,\\
L(n) &\:\leq\: (4n^3- 8n^2 + 16n - 120)/216\;\;\;\;{\rm if}\; n \equiv 28\bmod 36,\\
L(n) &\:\leq\: (4n^3- 8n^2 + 36n +  72)/216\;\;\;\;{\rm if}\; n \equiv 30\bmod 36,\\
L(n) &\:\leq\: (4n^3- 8n^2 + 68n + 224)/216\;\;\;\;{\rm if}\; n \equiv 32\bmod 36,\\
L(n) &\:\leq\: (4n^3- 8n^2 +  4n + 504)/216\;\;\;\;{\rm if}\; n \equiv 34\bmod 36.
\end{align*}
\end{theorem}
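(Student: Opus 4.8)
The plan is to derive every bound from the superadditive-type inequality $L(a+b+c)\le f(a,b)+f(b,c)+f(c,a)$ of \cref{th:ABC}, applied to a carefully chosen balanced partition $n=a+b+c$ with $a,b,c\approx n/3$. Since each of the three $f$-values is of order $(n/3)^3/\mathrm{const}$, the three terms together contribute the common leading term $\frac{n^3}{54}=\frac{4n^3}{216}$ appearing in every case; the residue-dependent lower-order terms come entirely from the parity of the parts, their residues modulo $12$ (for even parts) or $6$ (for odd parts), and the ceilings hidden in $C(a)$ and $C_*(a)$. Concretely, for each residue of $n$ I would fix a partition, bound each of $f(a,b),f(b,c),f(c,a)$ by the sharpest applicable estimate from \cref{sec:upper}, and then expand and simplify the resulting cubic (writing $n=18t+r$ for odd $n$ and $n=36s+r$ for even $n$) into the stated form.

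For odd $n$ I would take all three parts odd. Because $C_*(a)=C(a)=\lceil\frac{a^2-a}{6}\rceil$ for odd $a$, \cref{th:mu} (with $\mu(a,3)=a-2$, by \cref{th:_mu}) gives $f(a,b)=bC(a)$ whenever $b\ge a-2$. The delicacy is that $f$ is not symmetric, so in the cyclic sum $f(a,b)+f(b,c)+f(c,a)$ the second argument of each term must be at least the first minus $2$. Three odd integers lying in an interval of length at most $4$ can always be listed cyclically as largest, middle, smallest, the two forward drops being $2$ and the wrap-around an increase, so the hypothesis $b\ge a-2$ holds in all three terms; this is exactly the ``order matters'' phenomenon illustrated by $18t+15=(6t+7)+(6t+5)+(6t+3)$. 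For $n\equiv 3,9\bmod 18$ the three parts can be taken equal ($a=b=c=n/3\equiv 1,3\bmod 6$), giving $L(n)\le 3\cdot\frac{n}{3}\cdot\frac{(n/3)^2-(n/3)}{6}=\frac{4n^3-12n^2}{216}$; for the other odd residues I would spread the parts over (at most) three consecutive odd values, the extra correction terms arising precisely from the parts congruent to $5\bmod 6$, where $C(a)=\frac{a^2-a+4}{6}$ carries the ceiling.

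For even $n$ the number of odd parts among $a,b,c$ is even, and the sharp estimates for even first arguments depend on the residue modulo $12$ (\cref{th:2-6mod12} for $a\equiv 2,6\bmod 12$, \cref{th:0-4mod12} for $a\equiv 0,4\bmod 12$, together with \cref{th:6t+2or4,th:6t,th:large,th:S34}); this is why the period jumps from $18$ to $36=3\cdot 12$. Here I would use the exact-size constructions for $f(a,a)$ and $f(a,a-1)$ where available, and invoke \cref{th:+1,th:+2} to shift the second argument by one or two whenever the balanced partition forces $b=a+1$ or $b=a+2$ instead of the value supplied by the construction. Each of the twenty even residues is then a separate substitution into the matching proposition followed by simplification.

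The main obstacle is not any single estimate but the optimization and bookkeeping: for each residue one must identify the partition — including the cyclic order, since $f(a,b)\ne f(b,a)$ — that minimizes the lower-order terms, and the $18t+15$ example shows that several superficially similar partitions give strictly different constants. One must then check in each case that the chosen parts meet the hypotheses of the invoked propositions. The small-$n$ exclusions ($n>19$, $n>17$, $n>36,38,40$) are exactly the values where the underlying Steiner-system and disjoint-covering constructions of \cref{sec:upper} break down (the $a\ne 6,7$, $a\ne 8$, and $a>12$ restrictions), so these must be tracked individually; the remainder is routine but lengthy polynomial algebra in $t$ or $s$.
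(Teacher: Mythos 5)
Your proposal follows essentially the same route as the paper's proof: \cref{th:ABC} applied, for each residue of $n$ modulo $18$ (odd $n$) or $36$ (even $n$), to a near-balanced partition taken in a carefully chosen cyclic order so that the hypothesis $b\geq a-2$ (and its analogues) holds in every term, with each $f$-value bounded by the sharp estimates of Sections~2--4 and the totals verified by polynomial algebra in $t$ or $s$; your attribution of the small-$n$ exclusions to the $a\neq 6,7$, $a\neq 8$, and $a>12$ restrictions also matches the paper. The only trivia worth noting: there are eighteen (not twenty) even residues modulo $36$, and for $n\equiv 30,32,34 \bmod 36$ the paper additionally invokes \cref{th:lambda,th:_lambda} (via $f(12s+12,12s+9)$) alongside the tools you list, which is covered in spirit by your ``sharpest applicable estimate'' clause.
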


\begin{proof}[\bf{Proof}]
\Cref{th:exact} yields
\begin{align*}
f(6t-1,6t+1) & = 36t^3 -  12t^2 +   3t +  1, \\ 
f(6t+1,6t-1) & = 36t^3          -    t \;\;\;\; (t \geq 2),\\ 
f(6t+1,6t  ) & = 36t^3 +   6t^2,             \\ 
f(6t+1,6t+1) & = 36t^3 +  12t^2 +    t,      \\ 
f(6t+1,6t+3) & = 36t^3 +  24t^2 +   3t,      \\ 
f(6t+3,6t+1) & = 36t^3 +  36t^2 +  11t +  1, \\ 
f(6t+3,6t+2) & = 36t^3 +  42t^2 +  16t +  2, \\ 
f(6t+3,6t+3) & = 36t^3 +  48t^2 +  21t +  3, \\ 
f(6t+3,6t+4) & = 36t^3 +  54t^2 +  26t +  4, \\ 
f(6t+3,6t+5) & = 36t^3 +  60t^2 +  31t +  5, \\ 
f(6t+3,6t+6) & = 36t^3 +  66t^2 +  36t +  6, \\ 
f(6t+3,6t+7) & = 36t^3 +  72t^2 +  41t +  7, \\ 
f(6t+5,6t+3) & = 36t^3 +  72t^2 +  51t + 12, \\ 
f(6t+5,6t+5) & = 36t^3 +  84t^2 +  69t + 20, \\ 
f(6t+5,6t+6) & = 36t^3 +  90t^2 +  78t + 24, \\ 
f(6t+5,6t+7) & = 36t^3 +  96t^2 +  87t + 28, \\ 
f(6t+7,6t+5) & = 36t^3 + 108t^2 + 107t + 35, \\ 
f(6t+7,6t+6) & = 36t^3 + 114t^2 + 120t + 42. 
\end{align*}
\Cref{th:0-4mod12,th:+2} yield
\begin{align*}
f(12s  ,12s+1) & \leq 288s^3 +  18s^2 -   2s \;\;\;\; (s \geq 2), \\ 
f(12s  ,12s+3) & \leq 288s^3 +  66s^2 -   4s \;\;\;\; (s \geq 2), \\
f(12s+4,12s+3) & \leq 288s^3 + 258s^2 +  76s +  8, \\
f(12s+4,12s+5) & \leq 288s^3 + 306s^2 + 106s + 13, \\
f(12s+4,12s+7) & \leq 288s^3 + 354s^2 + 136s + 18.
\end{align*}
\Cref{th:exact2} yields
\begin{align*}
f(12s+2,12s+3) & = 288s^3 + 156s^2 +  28s +  2,\\ 
f(12s+6,12s+7) & = 288s^3 + 444s^2 + 228s + 39,\\
f(12s+6,12s+9) & = 288s^3 + 492s^2 + 274s + 50. 
\end{align*}
\Cref{th:6t,th:+2} yield 
\begin{align*}
f(12s+ 8,12s+7) & \leq 288s^3 + 552s^2 + 354s +  75, \\ 
f(12s+10,12s+9) & \leq 288s^3 + 696s^2 + 562s + 151.
\end{align*}
\Cref{th:S34,th:+2} yield 
\begin{align*}
f(12s+ 8,12s+9) & \leq 288s^3 + 600s^2 + 414s +  95.
\end{align*}
\Cref{th:lambda,th:_lambda} yield 
\begin{align*}
f(12s+12,12s+9) & \leq 288s^3 + 792s^2 + 724s + 220.
\end{align*}
To complete the proof for odd $n$, we use \cref{th:ABC} with the following partitions: 
\begin{align*}
  L(18t+1) & \leq f(6t+1,6t+1)+f(6t+1,6t-1)+f(6t-1,6t+1) \\
           & = 108t^3          +   3t +  1 \\
           & = \frac{1}{54} ((18t+1)^3 - 3(18t+1)^2 + 12(18t+1) + 44) \, ,
\end{align*}
\begin{align*}
  L(18t+3) & \leq 3f(6t+1,6t+1)
           \: = \: 108t^3 +  36t^2 +   3t \\
           & = \frac{1}{54} ((18t+3)^3 - 3(18t+3)^2) \, ,
\end{align*}
\begin{align*}
  L(18t+5) & \leq f(6t+3,6t+1)+f(6t+1,6t+1)+f(6t+1,6t+3) \\
           & = 108t^3 +  72t^2 +  15t +  1 \\
           & = \frac{1}{54} ((18t+5)^3 - 3(18t+5)^2 + 4) \, ,
\end{align*}
\begin{align*}
  L(18t+7) & \leq f(6t+3,6t+3)+f(6t+3,6t+1)+f(6t+1,6t+3) \\
           & = 108t^3 + 108t^2 +  35t +  4 \\
           & = \frac{1}{54} ((18t+7)^3 - 3(18t+7)^2 + 20) \, ,
\end{align*}
\begin{align*}
  L(18t+9) & \leq 3f(6t+3,6t+3) \\
           & = 108t^3 + 144t^2 +  63t +  9 \\
           & = \frac{1}{54} ((18t+9)^3 - 3(18t+9)^2) \, ,
\end{align*}
\begin{align*}
  L(18t+11) & \leq f(6t+5,6t+3)+f(6t+3,6t+3)+f(6t+3,6t+5) \\
            & = 108t^3 + 180t^2 + 103t + 20 \\
            & = \frac{1}{54} ((18t+11)^3 - 3(18t+11)^2 + 12(18t+11) - 20) \, ,
\end{align*}
\begin{align*}
  L(18t+13) & \leq f(6t+5,6t+5)+f(6t+5,6t+3)+f(6t+3,6t+5) \\
            & = 108t^3 + 216t^2 + 151t + 37 \\
            & = \frac{1}{54} ((18t+13)^3 - 3(18t+13)^2 + 24(18t+13) - 4) \, ,
\end{align*}
\begin{align*}
  L(18t+15) & \leq f(6t+7,6t+5)+f(6t+5,6t+3)+f(6t+3,6t+7) \\
            & = 108t^3 + 252t^2 + 199t + 54 \\
            & = \frac{1}{54} ((18t+15)^3 - 3(18t+15)^2 + 12(18t+15) + 36) \, ,
\end{align*}
\begin{align*}
  L(18t+17) & \leq f(6t+7,6t+5)+f(6t+5,6t+5)+f(6t+5,6t+7) \\
            & = 108t^3 + 288t^2 + 263t + 83 \\
            & = \frac{1}{54} ((18t+17)^3 - 3(18t+17)^2 + 24(18t+17) + 28) \, .
\end{align*}
To complete the proof for even values of $n$, we use \cref{th:ABC} with the following partitions: 
\begin{align*}
  L(36s)   & \leq f(12s+1,12s-1)+f(12s-1,12s)+f(12s,12s+1)
  \\ & \leq (288s^3 - 2s) + (288s^3 - 72s^2 + 12s) + (288s^3 + 18s^2 - 2s)
  \\ & = 864s^3 - 54s^2 + 8s
  \: = \: \frac{1}{216} (4(36s)^3 - 9(36s)^2 + 48(36s)) \, ,
\end{align*}
\begin{align*}
  L(36s+2)   & \leq f(12s+1,12s+1)+f(12s+1,12s)+f(12s,12s+1)
  \\ & \leq (288s^3 + 48s^2 + 2s) + (288s^3 + 24s^2) + (288s^3 + 18s^2 - 2s)
  \\ & = 864s^3 + 90s^2
  \\ & = \frac{1}{216} (4(36s+2)^3 - 9(36s+2)^2 - 12(36s+2) + 28) \, ,
\end{align*}
\begin{align*}
  L(36s+4)   & \leq f(12s+3,12s+1)+f(12s+1,12s)+f(12s,12s+3)
  \\ & \leq (288s^3 + 144s^2 + 22s + 1) + (288s^3 + 24s^2) 
  \\ & \;\;\;\;\;\;\;\;\;\; + (288s^3 + 66s^2 - 4s)
  \\ & = 864s^3 + 234s^2 + 18s + 1
  \\ & = \frac{1}{216} (4(36s+4)^3 - 9(36s+4)^2 - 12(36s+4) + 152) \, ,
\end{align*}
\begin{align*}
  L(36s+6)   & \leq f(12s+3,12s+1)+f(12s+1,12s+2)
  \\ & \;\;\;\;\;\;\;\;\;\; +f(12s+2,12s+3)
  \\ & = (288s^3 + 144s^2 + 22s + 1) + (288s^3 + 72s^2 + 4s) 
  \\ & \;\;\;\;\;\;\;\;\;\; + (288s^3 + 156s^2 + 28s + 2)
  \\ & = 864s^3 + 372s^2 + 54s + 3
  \\ & = \frac{1}{216} (4(36s+6)^3 - 10(36s+6)^2 + 12(36s+6) + 72) \, ,
\end{align*}
\begin{align*}
  L(36s+8)   & \leq f(12s+3,12s+3)+f(12s+3,12s+2)
  \\ & \;\;\;\;\;\;\;\;\;\; +f(12s+2,12s+3)
  \\ & = (288s^3 + 192s^2 + 42s + 3) + (288s^3 + 168s^2 + 32s + 2) 
  \\ & \;\;\;\;\;\;\;\;\;\; + (288s^3 + 156s^2 + 28s + 2)
  \\ & = 864s^3 + 516s^2 + 102s + 7
  \\ & = \frac{1}{216} (4(36s+8)^3 - 10(36s+8)^2 + 4(36s+8) + 72) \, ,
\end{align*}
\begin{align*}
  L(36s+10)   & \leq f(12s+3,12s+3)+f(12s+3,12s+4)
  \\ & \;\;\;\;\;\;\;\;\;\; +f(12s+4,12s+3)
  \\ & \leq (288s^3 + 192s^2 + 42s + 3) + (288s^3 + 216s^2 + 52s + 4) 
  \\ & \;\;\;\;\;\;\;\;\;\; + (288s^3 + 258s^2 + 76s + 8)
  \\ & = 864s^3 + 666s^2 + 170s + 15
  \\ & = \frac{1}{216} (4(36s+10)^3 - 9(36s+10)^2 + 140) \, ,
\end{align*}
\begin{align*}
  L(36s+12)   & \leq f(12s+3,12s+5)+f(12s+5,12s+4)
  \\ & \;\;\;\;\;\;\;\;\;\; +f(12s+4,12s+3)
  \\ & \leq (288s^3 + 240s^2 + 62s + 5) + (288s^3 + 312s^2 + 120s + 16) 
  \\ & \;\;\;\;\;\;\;\;\;\; + (288s^3 + 258s^2 + 76s + 8)
  \\ & = 864s^3 + 810s^2 + 258s + 29
  \\ & = \frac{1}{216} (4(36s+12)^3 - 9(36s+12)^2 + 36(36s+12) + 216) \, ,
\end{align*}
\begin{align*}
  L(36s+14)   & \leq f(12s+5,12s+5)+f(12s+5,12s+4)
  \\ & \;\;\;\;\;\;\;\;\;\; +f(12s+4,12s+5)
  \\ & \leq (288s^3 + 336s^2 + 138s + 20) + (288s^3 + 312s^2 + 120s + 16) 
  \\ & \;\;\;\;\;\;\;\;\;\; + (288s^3 + 306s^2 + 106s + 13)
  \\ & = 864s^3 + 954s^2 + 364s + 49
  \\ & = \frac{1}{216} (4(36s+14)^3 - 9(36s+14)^2 + 84(36s+14) + 196) \, ,
\end{align*}
\begin{align*}
  L(36s+16)   & \leq f(12s+7,12s+5)+f(12s+5,12s+4)
  \\ & \;\;\;\;\;\;\;\;\;\; +f(12s+4,12s+7)
  \\ & \leq (288s^3 + 432s^2 + 214s + 35) + (288s^3 + 312s^2 + 120s + 16) 
  \\ & \;\;\;\;\;\;\;\;\;\; + (288s^3 + 354s^2 + 136s + 18)
  \\ & = 864s^3 + 1098s^2 + 470s + 69
  \\ & = \frac{1}{216} (4(36s+16)^3 - 9(36s+16)^2 + 36(36s+16) + 248) \, ,
\end{align*}
\begin{align*}
  L(36s+18)   & \leq f(12s+7,12s+5)+f(12s+5,12s+6)
  \\ & \;\;\;\;\;\;\;\;\;\; +f(12s+6,12s+7)
  \\ & = (288s^3 + 432s^2 + 214s + 35) + (288s^3 + 360s^2 + 156s + 24) 
  \\ & \;\;\;\;\;\;\;\;\;\; + (288s^3 + 444s^2 + 228s + 39)
  \\ & = 864s^3 + 1236s^2 + 598s + 98
  \\ & = \frac{1}{216} (4(36s+18)^3 - 10(36s+18)^2 + 60(36s+18)) \, ,
\end{align*}
\begin{align*}
  L(36s+20)   & \leq f(12s+7,12s+7)+f(12s+7,12s+6)
  \\ & \;\;\;\;\;\;\;\;\;\; +f(12s+6,12s+7)
  \\ & = (288s^3 + 480s^2 + 266s + 49) + (288s^3 + 456s^2 + 240s + 42) 
  \\ & \;\;\;\;\;\;\;\;\;\; + (288s^3 + 444s^2 + 228s + 39)
  \\ & = 864s^3 + 1380s^2 + 734s + 130
  \\ & = \frac{1}{216} (4(36s+20)^3 - 10(36s+20)^2 + 4(36s+20)) \, ,
\end{align*}
\begin{align*}
  L(36s+22)   & \leq f(12s+9,12s+7)+f(12s+7,12s+6)
  \\ & \;\;\;\;\;\;\;\;\;\; +f(12s+6,12s+9)
  \\ & = (288s^3 + 576s^2 + 382s + 84) + (288s^3 + 456s^2 + 240s + 42) 
  \\ & \;\;\;\;\;\;\;\;\;\; + (288s^3 + 492s^2 + 274s + 50)
  \\ & = 864s^3 + 1524s^2 + 896s + 176
  \\ & = \frac{1}{216} (4(36s+22)^3 - 10(36s+22)^2 + 8(36s+22) + 88) \, ,
\end{align*}
\begin{align*}
  L(36s+24)   & \leq f(12s+7,12s+9)+f(12s+9,12s+8)
  \\ & \;\;\;\;\;\;\;\;\;\; +f(12s+8,12s+7)
  \\ & \leq (288s^3 + 528s^2 + 318s + 63) + (288s^3 + 600s^2 + 416s + 96) 
  \\ & \;\;\;\;\;\;\;\;\;\; + (288s^3 + 552s^2 + 354s + 75)
  \\ & = 864s^3 + 1680s^2 + 1088s + 234
  \\ & = \frac{1}{216} (4(36s+24)^3 - 8(36s+24)^2 - 144) \, ,
\end{align*}
\begin{align*}
  L(36s+26)   & \leq f(12s+9,12s+9)+f(12s+9,12s+8)
  \\ & \;\;\;\;\;\;\;\;\;\; +f(12s+8,12s+9)
  \\ & \leq (288s^3 + 624s^2 + 450s + 108) + (288s^3 + 600s^2 + 416s + 96) 
  \\ & \;\;\;\;\;\;\;\;\;\; + (288s^3 + 600s^2 + 414s + 95)
  \\ & = 864s^3 + 1824s^2 + 1280s + 299
  \\ & = \frac{1}{216} (4(36s+26)^3 - 8(36s+26)^2 - 16(s+26) + 104) \, ,
\end{align*}
\begin{align*}
  L(36s+28)   & \leq f(12s+9,12s+9)+f(12s+9,12s+10)
  \\ & \;\;\;\;\;\;\;\;\;\; +f(12s+10,12s+9)
  \\ & \leq (288s^3 + 624s^2 + 450s + 108)
  \\ & \;\;\;\;\;\;\;\;\;\; + (288s^3 + 648s^2 + 484s + 120) 
  \\ & \;\;\;\;\;\;\;\;\;\; + (288s^3 + 696s^2 + 562s + 151)
  \\ & = 864s^3 + 1968s^2 + 1496s + 379
  \\ & = \frac{1}{216} (4(36s+28)^3 - 8(36s+28)^2 + 16(s+28) - 120) \, ,
\end{align*}
\begin{align*}
  L(36s+30)   & \leq f(12s+9,12s+9)+f(12s+9,12s+12)
  \\ & \;\;\;\;\;\;\;\;\;\; +f(12s+12,12s+9)
  \\ & \leq (288s^3 + 624s^2 + 450s + 108)
  \\ & \;\;\;\;\;\;\;\;\;\; + (288s^3 + 696s^2 + 552s + 144) 
  \\ & \;\;\;\;\;\;\;\;\;\; + (288s^3 + 792s^2 + 724s + 220)
  \\ & = 864s^3 + 2112s^2 + 1726s + 472
  \\ & = \frac{1}{216} (4(36s+30)^3 - 8(36s+30)^2 + 36(s+30) + 72) \, ,
\end{align*}
\begin{align*}
  L(36s+32)   & \leq f(12s+9,12s+11)+f(12s+11,12s+12)
  \\ & \;\;\;\;\;\;\;\;\;\; +f(12s+12,12s+9)
  \\ & \leq (288s^3 + 672s^2 + 518s + 132)
  \\ & \;\;\;\;\;\;\;\;\;\; + (288s^3 + 792s^2 + 732s + 228) 
  \\ & \;\;\;\;\;\;\;\;\;\; + (288s^3 + 792s^2 + 724s + 220)
  \\ & = 864s^3 + 2256s^2 + 1974s + 580
  \\ & = \frac{1}{216} (4(36s+32)^3 - 8(36s+32)^2 + 68(s+32) + 224) \, ,
\end{align*}
\begin{align*}
  L(36s+34)   & \leq f(12s+9,12s+13)+f(12s+13,12s+12)
  \\ & \;\;\;\;\;\;\;\;\;\; +f(12s+12,12s+9)
  \\ & \leq (288s^3 + 720s^2 + 586s + 156)
  \\ & \;\;\;\;\;\;\;\;\;\; + (288s^3 + 888s^2 + 912s + 312) 
  \\ & \;\;\;\;\;\;\;\;\;\; + (288s^3 + 792s^2 + 724s + 220)
  \\ & = 864s^3 + 2400s^2 + 2222s + 688
  \\ & = \frac{1}{216} (4(36s+34)^3 - 8(36s+34)^2 + 4(s+34) + 504) \, .
\end{align*}

\end{proof}

\section{Concluding remarks}\label{sec:concluding}

We call a system of $r$-element subsets of $A \cup B$ (where $A \cap B = \emptyset$) 
an $(A,B)$-system if each triple $t \subseteq A \cup B$ with $|t \cap A| \geq 2$ 
is contained in one of these subsets. 
Let $f_r(a,b)$ denote the smallest size of such a system when $|A|=a$, $|B|=b$. 
Similarly to \cref{th:ABC}, 
\begin{align}\label{eq:general}
  L(a+b+c,r,3,4) \:\leq\: f_r(a,b) + f_r(b,c) + f_r(c,a) \, .
\end{align}
Let $X \subseteq A \cup B$, $\:|X|=r$. 
The number of triples $\{x',x'',y\}$ it contains with $x',x'' \in A$, $y \in B$ 
is $\binom{t}{2}(r-t)$ where $t=|X \cap A|$. 
This number is maximized when 
$t=2m$ if $r=3m$, 
$t=2m+1$ if $r=3m+1$, 
$t=2m+1$ or $t=2m+2$ if $r=3m+2$. 
Let $t=t(r)$ be an optimal value. 
Then
\begin{align}\label{eq:lower_general}
  f_r(a,b) \:\geq\: \frac{b \binom{a}{2}}{(r-t(r)) \binom{t(r)}{2}} \, .
\end{align}
One can show that there exists a construction that yields 
\begin{align*}
  f_r(a,a) \:\leq\: \frac{a \binom{a}{2}}{(r-t(r)) \binom{t(r)}{2}} \, (1+o(1))
  \;\;\;{\rm when}\; a \to \infty \, .
\end{align*} 

In some instances, the equality in \cref{eq:lower_general} can be established.

\begin{example}
Let ${\cal S}$ be a Steiner $S(3,5,a+1)$-system 
with the element set $[a+1]$ 
(such system are known for $a=4,16,25,64,100$). 
Set $A=[a]$, $B=\{y_1,\ldots,y_a\}$. 
For each $i=1,\ldots,a$,
we take each block of ${\cal S}$ that contains $i$ and then 
(a) remove $i$, 
(b) if the block contains $a+1$, replace it with $i$,
(c) add $y_i$.
Notice that for a fixed $i$, such blocks cover all pairs in $A$.
It is also easy to see that every triple from $A$ is covered by one of the blocks of the constructed system. 
Therefore, 
$f_5(a,a) \leq \frac{a^2 (a-1)}{12}$. 
The matching upper bound follows from \cref{eq:lower_general}, so
$f_5(a,a) = \frac{a^2 (a-1)}{12}$. 
By \cref{eq:general}, 
$L(3a,5,3,4) \leq \frac{a^2 (a-1)}{4}$, 
which is the best known upper bound for the abovementioned values of $a$. 
\end{example}

\begin{example}
A Steiner $S(t,k,a)$-system is called \emph{$p$-resolvable} 
if its block-set can be partitioned into Steiner $S(p,k,a)$ systems ($p \leq t$). 
An $S(3,4,a)$ can be $2$-resolvable only if $a \equiv 4 \bmod 12$, 
and its resolution consists of $(a-2)/2$ Steiner $S(2,4,a)$-systems whose blocks together cover all triples from the underlying $a$-element set. 
It is known (see \cite{Semakov:1971,Baker:1976,Teirlinck:1994}) 
that $2$-resolvable $S(3,4,a)$ exist for 
$a = 4^m$ and $a=2q^m + 2$ with $q=7,31,127$. 
(The case $q=127$ follows from 
Proposition~3.2 of \cite{Teirlinck:1994} with $K=\{127\}$.) 
Hence, for any such $a$ and any even $b \geq a-2$, 
there exist $b/2$ Steiner $S(2,4,a)$-systems 
whose blocks cover all triples from the underlying element set $A=[a]$. 
Let $B=\{y_1,\ldots,y_b\}$. 
Pad blocks from $S(2,4,a)$-system number $j$ with a pair ${y_{2j-1}, y_{2j}}$ 
to get $\frac{b}{2} \cdot \frac{a(a-1)}{12}$ blocks of size $6$ 
that cover all triples from $A$ 
as well as all triples with two elements in $A$ and one in $B$. 
Thus, $f_6(a,b) \leq \frac{ba(a-1)}{24}$. 
The matching upper bound follows from \cref{eq:lower_general}, so
$f_6(a,b) = \frac{ba(a-1)}{24}$. 
By \cref{eq:general}, 
$L(3a,6,3,4) \leq \frac{a^2(a-1)}{8}$, 
which is presently the best upper bound. 
\end{example}

\vspace{4mm}
{\bf Acknowledgments.}
No funding was received for conducting this study.
The author is grateful to Iliya Bluskov for helpful discussions.

\vspace{4mm}
{\bf Conflict of Interests Statement.} 
The author has no relevant financial or non-financial interests to disclose.

\vspace{4mm}
{\bf Data Accessibility Statement.}
The article describes entirely theoretical research, so no data sets have been used. 
Constructions of some $(n,4,3,4)$-lottery systems built on the basis of 
\cref{th:lotto} can be found on the website \cite{Italian:tables}.


\begin{thebibliography}{99}

\bibitem{Italian:tables}
Covering repository, available at 
\href{http://www.coveringrepository.com/}{http://www.coveringrepository.com/}.

\bibitem{Baker:1976}
R. D. Baker:
\newblock \emph{Partitioning the planes of $AG_{2m}(2)$ into $2$-designs},
\newblock \emph{Discrete Math.}, {\bf 15}(1976), 205--211
\doi{10.1016/0012-365X(76)90025-X}.

\bibitem{Bate:1998}
J. A. Bate and G. H. J. van Rees:
\newblock \emph{Lotto designs},
\newblock \emph{J. Comb. Math. Comb. Comput.}, {\bf 28}(1998), 15--39.

\bibitem{Bertolo:2004}
R. Bertolo, I. Bluskov, and H. H\"am\"al\"ainen:
\newblock \emph{Upper bounds on the general covering number $C_{\lambda}(v,k,t,m)$}, 
\newblock \emph{J. Combin. Designs}, {\bf 12}(2004) 362--380,
\doi{10.1002/jcd.20019}.

\bibitem{Cushing:2023}
D. Cushing and D. I. Stewart:
\newblock \emph{You need $27$ tickets to guarantee a win on the {UK} {N}ational {L}ottery},
\arxiv{2307.12430}.


\bibitem{Droesbeke:1982}
F. Droesbeke and M. Lorea:
\newblock \emph{On the lotto problem},
\newblock \emph{European J. Operational Research}, {\bf 11}(1982), 21--25,
\doi{10.1016/S0377-2217(82)80004-0}.

\bibitem{Etzion:1994}
T. Etzion: 
\newblock \emph{Large sets of coverings}, 
\newblock \emph{J. Combin. Designs}, {\bf 2}(1994), 359--374, 
\doi{10.1002/jcd.3180020509}.

\bibitem{Fort:1958}
M. K. Fort, Jr. and G. A. Hedlund: 
\newblock \emph{Minimal coverings of pairs by triples}, 
\newblock \emph{Pacific J. Math.}, {\bf 8}(1958), 709--719, 
\doi{10.2140/pjm.1958.8.709}.

\bibitem{Furedi:1996}
Z. F\"uredi, G. J. Sz\'ekely, and Z. Zubor: 
\newblock \emph{On the lottery problem}, 
\newblock \emph{J. Combin. Designs}, {\bf 4}(1996), 5--10, 
\doi{10.1002/(SICI)1520-6610(1996)4:1<5::AID-JCD2>3.0.CO;2-J}.

\bibitem{Hanani:1960}
H. Hanani: 
\newblock \emph{On quadruple systems}, 
\newblock \emph{Canadian J. Math.}, {\bf 12}(1960), 145--157, 
\doi{10.4153/CJM-1960-013-3}.

\bibitem{Hanani:1964}
H. Hanani, D. Ornstein, and V. T. S\'os: 
\newblock \emph{On the lottery problem}, 
\newblock \emph{Magyar Tud. Acad. Mat. Kutat\'o Int. K\"ozl.}, {\bf 9}(1964), 155--158.

\bibitem{Ji:2006}
L. Ji: 
\newblock \emph{Existence of large sets of coverings with block size $3$}, 
\newblock \emph{J. Combin. Designs}, {\bf 14}(2006), 400--405, 
\doi{10.1002/jcd.20077}.

\bibitem{Li:1999}
P. C. Li and G. H. J. van Rees: 
\newblock \emph{Lower bounds on lotto designs}, 
\newblock \emph{Congressus Numerantium}, {\bf 141}(1999), 5--30.

\bibitem{Li:2000}
P. C. Li, G. H. Joh, and G. H. J. van Rees:
\newblock \emph{New constructions of lotto designs}, 
\newblock \emph{Utilitas Mathematica}, {\bf 58}(2000), 45--64.

\bibitem{Li:2002}
P. C. Li and G. H. J. van Rees:
\newblock \emph{Lotto design tables},
\newblock \emph{J. Combin. Designs}, {\bf 10}(2002), 335--359,
\doi{10.1002/jcd.10020}.

\bibitem{Markstrom:2022}
K. Markstr\"{o}m: 
\newblock \emph{Covering designs}, available at \hspace*{\fill} \linebreak
\href{http://abel.math.umu.se/~klasm/Data/hypergraphs/coveringdesign.html}{abel.math.umu.se/{\textasciitilde}klasm/Data/hypergraphs/coveringdesign.html}.


\bibitem{Montecalvo:2015}
F. Montecalvo:
\newblock \emph{Asymptotic bounds for general covering designs},
\newblock \emph{J. Combin. Designs}, {\bf 23}(2015), 18--44,
\doi{10.1002/jcd.21401}.

\bibitem{Razborov:2014}
A. A. Razborov: 
\newblock \emph{On {T}ur\'an's $(3,4)$-problem with forbidden subgraphs},
\newblock \emph{Math. Notes} {\bf 94}(2014), 245--252, 
\doi{10.1134/S000143461401026X}.

\bibitem{Sidorenko:1995}
A. Sidorenko: 
\newblock \emph{What we know and what we do not know about {T}ur\'{a}n numbers}, 
\newblock \emph{Graphs Combin.}, {\bf 11}(1995), 179--199, 
\doi{10.1007/BF01929486}.

\bibitem{Semakov:1971}
N. V. Semakov, V. A. Zinov'ev, and G. V. Zaitsev:
\newblock \emph{Uniformly Packed Codes}, 
\newblock \emph{Probl. Peredachi Inf.}, {\bf 7}(1971), 38--50 (in Russian), 
English traslation: 
\emph{Problems Inform. Transmission}, {\bf 7}(1971), 30--39.

\bibitem{Spencer:1968}
J. Spencer: 
\newblock \emph{Maximal consistent families of triples}, 
\newblock \emph{J. Combin. Theory}, {\bf 5}(1968), 1--8, 
\doi{10.1016/S0021-9800(68)80023-7}.

\bibitem{Vanstone:1993}
S. A. Vanstone, D. R. Stinson, P. J. Schellenberg, A. Rosa, R. Rees, C. J. Colbourn, M. W. Carter, and J. E. Carter: 
\newblock \emph{Hanani triple systems},
\newblock \emph{Israel J. Math.}, {\bf 83}(1993), 305--319,
\doi{10.1007/BF02784058}. 

\bibitem{Teirlinck:1991}
L. Teirlinck: 
\newblock \emph{A completion of {L}u's determination of the spectrum for large sets of disjoint Steiner triple systems},
\newblock \emph{J. Combin. Theory, ser. A}, {\bf 57}(1991), 302--305, 
\doi{10.1016/0097-3165(91)90053-J}.

\bibitem{Teirlinck:1994}
L. Teirlinck: 
\newblock \emph{Some new $2$-resolvable Steiner quadruple systems}, 
\newblock \emph{Designs, Codes and Cryptography}, {\bf 4}(1994), 5--10, \doi{10.1007/bf01388556}.


\end{thebibliography}
\end{document}